\newcommand\numberthis{\addtocounter{equation}{1}\tag{\theequation}}
\def \Rd{\mathbb{R}^d}
\def \R {\mathbb{R}}
\def \xc {X_i}
\def \Projs {\Pi_i ^s }
\def \xlbc{{Y}^L_i}
\def \flq{{F}^{L}_i}
\def \ProjL {\Pi_i ^{L_i}}
\def \Projs {\Pi_i^{s}}
\def \xrbc{{Y}^R_i}
\def \frq{{F}^{R}_i}
\def \ProjR {\Pi_i^{R_i} }
\def \xnum {{\vec{Y}}}
\def \xnumc {{Y}}
\def \xthetac{{Y}^{\theta}_i}
\def \fc {f_i}
\def  \gc{g_i}
\def \Wc{W_i}
\def \DWnc {\Delta W_{i,n}}
\def \E {\mathbb{E}}
\def \P {\mathbb{P}} 
\def \l {\left}
\def \r {\right}
\newcommand\norm[1]{\left\lVert#1\right\rVert}
\newcommand\eval[1]{\mathbb{E}\left[ #1  \right]    }
\newcommand{\der}[1]{\partial_{ #1 } } 
\newcommand{\secder}[1]{\partial ^2  _{ #1 } } 
\newcommand {\intn}[1] {\int _{t_n}^{ #1 }}
\def \sumj  {\sum_{j=1}^d}
\renewcommand{\vec}[1]{\mathbf{#1}}
\newcommand{\NZ}[1] {\mathbb{N}_{#1}}
\newtheorem{definition*}{Definition}
\newtheorem{theorem}{Theorem}
\newtheorem{lemma}{Lemma}
\newtheorem{assumption}{Assumption}
\newtheorem{corollary}{Corollary}
\newcommand{\Dt}{\Delta t}
\newcommand{\MMin}{{M_{i,n+1}}}
\newcommand{\Ymil}{\mathcal{Y}}
\newcommand{\ylbc}{{{\Ymil}^L_i}}
\newcommand{\yrbc}{{{\Ymil}^R_i}}
\newcommand{\ynum}{\vec{\Ymil}}
\newcommand{\ynumc}{{\Ymil}}
\newcommand{\rhoii}{\rho_{11}}
\newcommand{\rhoio}{\rho_{10}}
\newcommand{\rhooi}{\rho_{01}}
\newcommand{\rhooo}{\rho_{00}}
\newcommand{\EMMean}{\texttt{EM-Mean}\xspace}
\newcommand{\EMWeighted}{\texttt{EM-Weighted}\xspace}
\newcommand{\MilMean}{\texttt{Mil-Mean}\xspace}
\newcommand{\ProjEM}{\texttt{Proj-EM}\xspace}
\newcommand{\ProjMil}{\texttt{Proj-Mil}\xspace}
\newcommand{\bfzeta}{\boldsymbol{\zeta}}
\newcommand{\lte}{\text{LTE}}
\begin{document}

\title[Domain preservation]{Preserving invariant domains and strong approximation of stochastic differential equations}
\author{Utku Erdo\u{g}an}
\address{Eskisehir Technical University, Department of Mathematics,Turkey. }\email{utkuerdogan@eskisehir.edu.tr}
\author{Gabriel J. Lord}
\address{Department of Mathematics, IMAPP, Radboud University, Nijmegen, The Netherlands.}
\email{gabriel.lord@ru.nl}



\begin{abstract}
In this paper, we develop numerical methods for solving Stochastic Differential Equations (SDEs) with solutions that evolve within a hypercube $D$ in $\mathbb{R}^d$. Our approach is based on a convex combination of two numerical flows, both of which are constructed from positivity preserving methods. The strong convergence of the Euler version of the method is proven to be of order $\tfrac{1}{2}$, and numerical examples are provided to demonstrate that, in some cases, first-order convergence is observed in practice. We compare the Euler and Milstein versions of these new methods to existing domain preservation methods in the literature and observe our methods are robust, more widely applicable and that the error constant is in most cases superior.
\end{abstract}


\maketitle

\section{Introduction}
We consider the following system of SDEs in $\Rd$ given for $i=1,2,\ldots,d$ by
\begin{equation}\label{eq: componentSDE}
d\xc (t)= \fc (\vec{X}(t)) dt + G_i(\vec{X}(t))d\Wc (t), \quad \xc(0)=X_{i,0} .
\end{equation}
In particular given $L_i>R_i>0$ we take
$$G_i(\vec{y}) := g_i(\vec{y})(y_i-L_i)(R_i-y_i),$$
for any continuously differentiable function $g_i$ and $\vec{y}\in\Rd$
and impose conditions on $f_i$ and initial data $X_{i,0}$  that ensure that $X_i(t)\in (L_i,R_i)$ for all $t>0$,  (see Assumptions \ref{ass:fg} and \ref{ass:X0} below). 
The system in \eqref{eq: componentSDE} can be written as 
\begin{equation}\label{eq: vectorSDE}
d \vec{X} (t)= \vec{f} (\vec{X}(t)) dt + G(\vec{X}(t)) d\vec{W}(t),  \quad \vec{X} (0)=\vec{X}_0 
\end{equation}
where $G$ is a diagonal matrix with $G_i$ on the diagonal. 

We then have (see Theorem \ref{teo:exact_boundedness}) that $\vec{X}(t)\in D$ for all $t\in [0,T]$ where $D$ is the hypercube 
$$D:=(L_1,R_1)\times(L_2,R_2)\times \ldots \times (L_d,R_d).$$ 

The development of domain-preserving numerical methods for SDEs is an active area of research driven by the need for solutions that respect physical and practical constraints.
We are interested in the strong approximation and strong convergence of numerical methods for \eqref{eq: componentSDE}.
One well studied example is the Susceptible-Infected-Susceptible (SIS) epidemic model introduced in \cite{gray2011} that can be written as 
\begin{equation}
\label{eq:SIS}
   dI(t) = I(\eta N -\mu-\gamma - \beta I)dt + \sigma I(N-I) dW(t), 
\end{equation}
where $\beta,\mu$,$\gamma$ and $\sigma$ are parameters and $N$ is the total population size. The authors prove that $I(t)\in (0,N)$ a.s., that is $L_1=0$ and $R_1=N$.

Many methods for \eqref{eq:SIS} and other equations rely on the Lamperti transformation. One advantage of this is that methods are often of higher order and converge with rate one. A disadvantage of the Lamperti transform is that the methods are SDE specific as the transform depends on the form of the diffusion.
Neuenkirch and Szpruch  \cite{neuenkirch2014first} derived an implicit scheme based on Lamperti transformation, that preserves the domain of one-dimensional SDEs taking values in a domain such as $D$. 
Chen et. al. in \cite{chen2021first} introduced a Lamperti smoothing truncation scheme specific to \eqref{eq:SIS}. 
Yang and Huang constructed a domain preservation method for \eqref{eq:SIS} by combining the
logarithmic transformation and the Euler–Maruyama method in \cite{yang2021first} and 
Cai et. al.  \cite{cai2022}  investigated the numerical solution to a stochastic epidemic model with square-root diffusion term by modifying the classical Euler-Maruyama Method.  Another model with a square-root type diffusion is the Wright-Fisher model where solutions remain in the interval $[0,1]$ for all time almost surely. This has been examined by a number of authors, for example \cite{burrage_wright_fisher} presented a numerical method that guarantees approximations preserve the domain and \cite{STAMATIOU2018132} adapts a positivity preserving method to also obtain domain preservation for this equation. 

An alternative to the transformation approach is to either reflect or project the numerical approximation back into the domain. This is particularly appropriate for more complex domains $D$, see \cite{milsteinbook} and can be used for reflecting SDEs \cite{LeimkuhlerSharmaTretyakov}. 
Recently \cite{ulander2023boundary}  proposed an explicit boundary preserving scheme based on Lamperti transform followed by a Lie-Trotter splitting for scalar SDEs with reflection into the domain.

Note that we consider preserving a bounded domain, where  $R_i<\infty$. There have, however, been many approaches to positivity preserving SDEs. These may be based on Lamperti transform such as \cite{li2023positivity}, truncation methods, e.g. \cite{mao2021positivity}, logarithmic Euler as in \cite{lei2023}, or modification of the flow such as the balanced implicit methods \cite{Milstein19981010,Kahl2006143}.
Of direct relevance for this work is the method of \cite{bossy2021,bossy2024strongconvergenceexponentialeuler} on an exponential Euler scheme preserving positivity for the scalar case, see also \cite{brehier2023positivity} for SPDEs and more general methods based on geometric Brownian motion \cite{utkuLord}.

Our approach is applicable to a wide range of systems of general SDEs and can be extended to higher orders in a systematic manner. The method is derived by applying a positivity preserving method component wise twice to preserve $L_i$ and $R_i$. The two approximations are then combined to preserve the domain.
In Section \ref{sec:DomPresSDE} we impose our conditions of the drift and diffusion and prove the domain $D$ is preserved by solutions of the SDE. 
We illustrate how to derive the Euler version of our numerical method in \ref{sec:derivation} by first considering in Section \ref{sec:positivity} positivity preserving methods before showing in Section \ref{sec:DomPresMethod} how to construct a domain preserving method and Section \ref{sec:Milstein} illustrates the generalization to a Milstein type method.
In Section \ref{sec:StrongConv} we prove strong convergence for the Euler version of the numerical method and finally in Section \ref{sec:numerics} we compare the Euler and Milstein versions of the methods to other methods in the literature.

\section{Domain preservation for the SDE}
\label{sec:DomPresSDE}
Before we begin we define the set $\NZ{d}:=\{1,2,\ldots,d\}$ and the following useful notation for the projection of $i$th component
\begin{equation}\label{eq:Projs}  
\Projs : (y_1,\ldots,y_{i-1},y_{i},y_{i+1},\ldots, y_d) \to (y_1,\ldots,y_{i-1},s,y_{i+1},\ldots, y_d).
\end{equation}
We let $\vec{L}:=(L_1,L_2,\ldots,L_d)^T$ and $\vec{R}:=(R_1,R_2,\ldots,R_d)^T$.
We work on the probability space $(\Omega,\mathcal{F},\P)$ with normal filtration $(\mathcal{F}_t)_{t \in [0,T]}$ with $T \in (0,\infty)$. Throughout the paper, $\norm{ \cdot }$ denotes $\mathcal{\ell}_2 $ norm in $\mathbb{R}^d$. 

Extending the result given in \cite{chen2021first} for one-dimensional SDE's, we give the following existence and boundedness  result for  multi-dimensional SDEs of the form \eqref{eq: componentSDE}.
\begin{assumption}
\label{ass:fg}
    Let $\vec{f},\vec{g} \in C^1 \l(\bar{D};\Rd \r)$. For $\vec{y}\in\Rd$ and every $i\in \NZ{d}$ let 
\begin{equation} \label{eq: boundary_ineq}
\fc(\ProjL (\vec{y} ) \geq 0, \quad \text{} \quad  \fc(\ProjR (\vec{y})  ) \leq 0,
\end{equation}    
and 
\begin{equation}
G_i(\vec{y}) := g_i(\vec{y})(y_i-L_i)(R_i-y_i).    
\end{equation}
\end{assumption}
By boundedness of the continuous functions on a compact domain we define $$ K_{f_i}:=\max_{\vec{y} \in \bar{D}} \fc(\vec{y}) \quad \text{and} \quad  K_{g_i}:=\max_{\vec{y} \in \bar{D}} \gc(\vec{y}).$$ 
 Let $\partial _j \fc$ and $\partial _j g_i$  denote the partial derivative of $\fc$ and $\gc$ with respect to the $j$th variable. By Assumption \ref{ass:fg}, there exist real numbers $K_{{f}_{j,i}'}>0$ and $K_{{g}_{j,i}'}>0$  such that for $\vec{y}\in D$.
\begin{equation}\label{eq: bounded_derivative}
\l| \partial _j \fc (\vec{y}) \r| \leq K_{{f}_{j,i}'}, \quad \l| \partial _j g_i (\vec{y}) \r| \leq K_{{g}_{j,i}'}.
\end{equation}
Finally, throughout the paper, we use the constant 
\begin{equation}
\label{eq:Cconst}
C := \max_{i,j \in \NZ{d}} \{K_{f'_{j,i}},K_{g'_{j,i}},K_{f_i} ,K_{g_i} \}.
\end{equation}
\begin{assumption}
    \label{ass:X0}
    Let $\vec{X}(0)=\vec{X}_0 \in D$.
\end{assumption}
Our first result shows that the solution to \eqref{eq: vectorSDE} preserves the domain $D$. This result immediately gives bounded moments of the solution $\vec{X}(t)$ for all $t>0$. 
\begin{theorem}\label{teo:exact_boundedness} Let Assumptions \ref{ass:fg} and \ref{ass:X0} hold. Then, the SDE system \eqref{eq: componentSDE} has a unique strong solution $\vec{X}(t)$ satisfying 
for all $i \in \NZ{d}$
\begin{equation} 
\P \Big( \xc(t) \in (L_i,R_i), \quad \forall  t \geq 0 \Big)=1 .
\end{equation}
\end{theorem}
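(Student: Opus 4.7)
My plan is to use a Feller-type Lyapunov argument. First I would establish local existence and pathwise uniqueness: since $\vec{f},\vec{g}\in C^1(\bar D;\mathbb{R}^d)$, the coefficients of \eqref{eq: vectorSDE} are locally Lipschitz on $D$. Extending $\vec f$ and $G$ to globally Lipschitz functions on $\mathbb{R}^d$ via a smooth cut-off that agrees with them on $\bar D$, classical SDE theory yields a unique global strong solution to the extended problem. Define the exit times
\[
\tau_n := \inf\{t\ge 0 : X_i(t)\notin (L_i+\tfrac{1}{n}, R_i-\tfrac{1}{n}) \text{ for some } i\in\NZ{d}\},\qquad \tau_\infty := \lim_{n\to\infty}\tau_n.
\]
Up to $\tau_\infty$ the extended coefficients coincide with the originals, so the only remaining task is to prove $\tau_\infty=\infty$ a.s.; this will establish $X_i(t)\in(L_i,R_i)$ for all $t\ge 0$ and all $i$.

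For the non-exit argument I would introduce the Lyapunov function
\[
\Phi(\vec y) := -\sum_{i=1}^d \log(y_i-L_i)-\sum_{i=1}^d\log(R_i-y_i),\qquad \vec y\in D,
\]
which blows up precisely as some coordinate approaches a face of the hypercube. Applying Itô's formula to $\Phi(\vec X(t\wedge\tau_n))$, the second-order terms produce contributions proportional to $g_i^2(R_i-X_i)^2$ and $g_i^2(X_i-L_i)^2$, both uniformly bounded on $\bar D$ by \eqref{eq:Cconst}. The delicate terms are the drift ratios
\[
-\frac{\fc(\vec X)}{X_i-L_i}\qquad\text{and}\qquad \frac{\fc(\vec X)}{R_i-X_i}.
\]
Using the mean value theorem in the $i$th coordinate I would write $\fc(\vec X)=\fc(\ProjL(\vec X))+\partial_i \fc(\bfzeta)(X_i-L_i)$ for some $\bfzeta\in\bar D$. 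Since $\fc(\ProjL(\vec X))\ge 0$ by \eqref{eq: boundary_ineq} and $|\partial_i \fc|\le C$ by \eqref{eq: bounded_derivative}, the first ratio is bounded below by $-C$, so its negative is bounded above by $C$. An analogous expansion around $\ProjR(\vec X)$, combined with $\fc(\ProjR(\vec X))\le 0$, bounds the second ratio above by $C$. Consequently, the drift of $\Phi(\vec X)$ is bounded above by a deterministic constant $K$ depending only on $C$ and $\max_i(R_i-L_i)$.

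After localizing by $\tau_n$ the stochastic integrals in the Itô expansion are true martingales, so taking expectations gives
\[
\expect{\Phi(\vec X(t\wedge\tau_n))} \le \Phi(\vec X_0) + Kt.
\]
On the event $\{\tau_n\le t\}$ at least one component of $\vec X(\tau_n)$ lies within $1/n$ of the boundary, forcing $\Phi(\vec X(\tau_n))\ge \log n - M$ for a constant $M$ independent of $n$. A Markov-type inequality then yields $\P(\tau_n\le t)\le (\Phi(\vec X_0)+Kt+M)/\log n$, which tends to $0$ as $n\to\infty$, hence $\tau_\infty=\infty$ a.s. The main obstacle is the careful treatment of the drift ratios near the boundary; however, the sign conditions \eqref{eq: boundary_ineq} combined with the uniform derivative bound \eqref{eq: bounded_derivative} are tailored exactly to produce the needed one-sided estimates, so once the Lyapunov function is chosen the argument proceeds routinely.
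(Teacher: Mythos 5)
Your proof is correct, and it follows the same overall architecture as the paper's — localization by exit times from shrinking sub-boxes, a Lyapunov function that blows up at the faces of the hypercube, and the key one-sided drift estimate obtained from the sign conditions \eqref{eq: boundary_ineq} together with the derivative bounds \eqref{eq: bounded_derivative} (your mean-value-theorem expansion of $f_i$ around $\Pi_i^{L_i}(\vec{X})$ and $\Pi_i^{R_i}(\vec{X})$ is exactly the paper's inequality \eqref{eq: combined_ineq}, derived there via the fundamental theorem of calculus). The differences are in the choice of Lyapunov function and the endgame. The paper uses the reciprocal function $\mathcal{V}_i(\vec y)=\tfrac{1}{y_i-L_i}+\tfrac{1}{R_i-y_i}$, for which one only gets $\mathcal{L}\mathcal{V}_i\le c\,\mathcal{V}_i$, so a Gronwall step is needed and the conclusion is reached by contradiction, using $\mathcal{V}_j(\vec X(\tau_{m_j}))\ge m_j$ on the exit event. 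Your logarithmic function $\Phi$ has generator bounded by a constant $K$ outright, which removes the Gronwall step, gives the cleaner additive bound $\mathbb{E}[\Phi(\vec X(t\wedge\tau_n))]\le\Phi(\vec X_0)+Kt$, and lets you conclude directly via a Chebyshev-type estimate $\P(\tau_n\le t)\le(\Phi(\vec X_0)+Kt+M)/\log n\to 0$ rather than by contradiction; the price is a slower ($\log n$ versus $m_j$) divergence, which is still ample. One small point to make explicit when writing this up: $\Phi$ is not nonnegative on $D$, so the Markov inequality must be applied to $\Phi+M'$ with $M':=2d\max_i|\log(R_i-L_i)|$ bounding $\Phi$ from below — you implicitly absorb this into your constant $M$, which is fine but worth stating.
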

\begin{proof} We generalize the  existence proofs given in \cite{chen2021first, gray2011} for the SIS model \eqref{eq:SIS}. 
First we determine the bounds for drift components in terms of derivative bounds on the domain $D$. By the fundamental theorem of calculus and \eqref{eq: bounded_derivative}, we have
\begin{equation*}
\fc(\vec{y})-\fc(\ProjL (\vec{y}))=\int _{L_i} ^{ y_i} \partial _i \fc(\Projs (\vec{y})) 
ds \geq -K_{{f}_{i,i}'} (y_i -L_i)
\end{equation*}
and 
\begin{equation*}
\fc(\ProjR (\vec{y}) )-\fc(\vec{y})=\int _{y_i} ^{R_i} \partial _i \fc(\Projs (\vec{y})) d s \geq -K_{{f}_{i,i}'} (R_i-y_i).
\end{equation*}
Combining these inequalities and using \eqref{eq: boundary_ineq} yields
     \begin{equation} \label{eq: combined_ineq}
 -K_{{f}_{i,i}'}(y_i-L_i) \leq \fc(\vec{y}) \leq K_{{f}_{i,i}'} (R_i-y_i).
 \end{equation}
By the local Lipschitz continuity of the coefficient functions implied by Assumption  \ref{ass:fg}, we are assured the existence of a unique maximal local solution $\vec{X} (t)$ on $[0,\tau_e)$ where $\tau_e$ is the explosion time, as discussed for example in \cite{maoBook, gray2011}. 
By choosing $m_{i,0}>0$ sufficiently large such that
$$L_i+ \dfrac{1}{m_{i,0}} < \xc (0) < R_i-\dfrac{1}{m_{i,0}},$$ 
we can define a stopping time for $m_i \geq m_{i,0}$
$$
\tau_{ m_i}=\inf \Big\{ t \in [0,\tau_e) \bigg \vert \xc (t) \not\in \l( L_i+ \dfrac{1}{m_i},R_i- \dfrac{1}{m_i} \r) \Big\}.
$$
We consider the operator $\mathcal{L}$ for a sufficiently smooth scalar valued function $\phi: D \to \mathbb{R}$  
$$
\mathcal{L}\phi (\vec{y})  = \vec{f}(\vec{y})^T \nabla \phi (\vec{y})  dt   +\dfrac{1}{2} \sum _{k=1}^m \l( G_k(\vec{y}) \vec{e}_k \r)^T\nabla^2  \phi(\vec{y})  \l(G_k(\vec{y})\vec{e}_k \r)
$$
where $\vec{e}_k$ is the standard $k$th unit vector. 
We apply $\mathcal{L}$ to the  function $\mathcal{V}_i:D\to\R$  defined for any $i \in \mathbb{N}_d$ by
$$
\mathcal{V}_i(\vec{y})=\dfrac{1}{y_i-L_i}+\dfrac{1}{R_i-y_i}.
$$ 
For the solution to the SDE \eqref{eq: componentSDE}, we have
\begin{align*}
\mathcal{L}\mathcal{V}_i(\vec{X}(t))&=\l( -\dfrac{1}{(X_i(t)-L_i)^2}+\dfrac{1}{(R_i-X_i(t))^2}\r) \fc(\vec{X}(t)\\
& +{\gc(\vec{X}(t)}^2 (X_i(t)-L_i)^2 (R_i-X_i(t))^2 \l(\dfrac{1}{(X_i(t)-L_i)^3}+\dfrac{1}{(R_i-X_i(t))^3} \r).
\end{align*}
When   $t< \tau_{m_k}$ for all $k \in \NZ{d}$ 
we can obtain, using  \eqref{eq: combined_ineq},  
$$\mathcal{L} \mathcal{V}_i(\vec{X}(t)) \leq \l( K_{{f}_{i,i}'}+K_{g_i} ^2 (R_i-L_i)^2     \r) \mathcal{V}_i(\vec{X}(t)).$$ 
Let $j$ be such that 
$$\tau_{m_j}:=\min _{k \in \NZ{d}} \tau_{m_k}.$$ 
Now by applying Ito's formula 
and Gronwall's Lemma,  for $m_j>m_{j,0}$, we have
\begin{align*}\label{eq: gronwall_result}
\E \l[  \mathcal{V}_j(\vec{X}(t \wedge \tau_{ m_j}))\r] &= \mathcal{V}_j(\vec{X}(0))+\E \l[ \int_0 ^{t \wedge \tau_{ m_j}} \mathcal {L} \mathcal{V}_j(\vec{X}(s)) ds \r] \\
& \leq \mathcal{V}_j(\vec{X}(0))+\l( K_{f'_{j,j}}+K^2_{g_j}  (R_j-L_j)^2     \r) \int_0 ^{t } \E \l[  \mathcal{V}_j(\vec{X}(s \wedge \tau_{ m_j})) \r]  ds\\
& \leq \mathcal{V}_j(\vec{X}(0)) e^{ C+C ^2 (R_j-L_j)^2 } .\numberthis
\end{align*}
We now summarize the contradiction argument presented in \cite{gray2011}, to conclude that $\lim_{m_j \to \infty } \tau_{ m_j}=\infty$. 
Assume that $\lim_{m_j \to \infty } \tau_{ m_j}=\tau_{\infty}$ is finite. This implies the existence of the pair of constants  $t^*$ and $\epsilon \in (0,1)$ such that 
\begin{equation*}
\mathbb{P} \l(\tau_{\infty}\leq t^*\r) > \epsilon.
\end{equation*}
Therefore, there is an $m_{j,1}>m_{j,0}$ such that 
\begin{equation*}
\mathbb{P} \l(\Omega_{m_j} \r) \geq \epsilon, \qquad \text{for all } m_{j}>m_{j,1}
\end{equation*}
where $\Omega_{m_j}=\{ \tau_{m_j}\leq t^* \}$.  For every $\omega \in \Omega_{m_j}$, $X_j(t,\omega)$ hits the boundaries at either $L_j+\frac{1}{m_j}$ or $R_j-\frac{1}{m_j}$. By definition of $\mathcal{V}_j$,  we have 
\begin{equation*}
\mathcal{V}_j(\vec{X}(\tau_{m_j},\omega))\geq  m_j \qquad  \text{for all } \omega \in \Omega_{m_j}.
\end{equation*}
By \eqref{eq: gronwall_result}, 
\begin{align*}
\mathcal{V}_j(\vec{X}(0)) e^{ C+C^2 (R_j-L_j)^2 }&\geq \E \l[ \mathcal{V}_j(\vec{X}(t \wedge \tau_{ m_i}))\r] \\
& \geq \E \l[  \mathbbm{1} _{\Omega_{m_j}} (\omega)  \mathcal{V}_j(\vec{X}(t \wedge \tau_{ m_j}), \omega)\r]\\
& \geq m_j \E \l[ \mathbbm{1} _{\Omega_{m_j}}  \r] \geq m_j \epsilon.
\end{align*}
As $m_j$ tends to infinity, we get
$$
\mathcal{V}_j(\vec{X}(0)) e^{ C+C ^2 (R_j-L_j)^2 }= \infty
$$
which is a contradiction of the assumption $\tau_{\infty}$ was finite.  Therefore, $\lim_{m_i \to \infty }\tau_{ m_i}=\infty$ for all $i \in \NZ{d} $\end{proof}

\section{Derivation of the numerical methods}
\label{sec:derivation}
Given $N\in \mathbb{N}$ and final time $T$ we set the time step size $\Delta t=\frac{T}{N}$. This gives the uniform time partition $0=t_0<t_1    <t_2<\hdots <t_N=T$ with $t_n=n\Dt$. 

Our approach is based on taking a positivity preserving method and applying it twice in each component. Once in order to find $Y^L_i(t_n) \approx X_i(t_n)$ so that $Y^L_i(t_n)>L_i$ and once again to find $Y^R_i(t_n)\approx X_i(t_n)$ so that $Y^R_i(t_n)<R_i$ for all $i\in \NZ{d}$.
Finally a convex linear combination of  $Y^L_i(t_n)>L_i$ and $Y^R_i(t_n)<R_i$ is used to obtain our approximation $Y_i(t_n)$  to $X_i(t_n)$.
In Section \ref{sec:positivity} we introduce the positivity preserving method and in Section \ref{sec:DomPresMethod} combine these to obtain the Euler version of our method. In Section \ref{sec:Milstein} we illustrate the key steps to derive a Milstein version.

\subsection{Positivity preserving 
integrators}
\label{sec:positivity}
Consider the following SDE system with $C^1$ coefficient functions $a_i,b_i:\mathbb{R}^d \to \mathbb{R}^d$, 
\begin{equation}\label{eq:SDEsystem}
dU_i=a_i(\vec{U}(t))U_i(t)dt+b_i(\vec{U}(t))U_i(t) dW_i(t), \quad i\in \NZ{d}
\end{equation}
where $U_i(t)$ is the $i$th component of the solution $\vec{U}(t)$ to the SDE system.
We seek a numerical method of the form 
\begin{equation}
\label{eq:magnusFormComponent}
V_{i,n+1}=\exp(\mathcal{M} _{i,n}) V_{i,n}
\end{equation}
where $V_{i,n}$ is an approximation to the $U_i(t_n)$ at $t=t_n$.
Clearly from \eqref{eq:magnusFormComponent} given $V_{i,n}>0$ then by construction $V_{i,n+1}>0$.
Equivalently, we can write \eqref{eq:magnusFormComponent} as 
\begin{equation}
\label{eq:LogarithmicFormComponent}
\ln (V_{i,n+1}) -\ln (V_{i,n}) =\mathcal{M} _{i,n} .
\end{equation}
A numerical scheme of the form \eqref{eq:magnusFormComponent} can be considered as a form of exponential integrator with a diagonal exponential matrix, a geometric Brownian motion integrator \cite{utkuLord,tamedGBM,weakGBM} or as a special form of a Magnus integrator, see for example \cite{BURRAGE199934,LordMalhamWiese,DAMBROSIO2024128610}.
Consider $\ln (U_i)$, for $i\in\NZ{d}$ then by Ito's formula
$$
d\big(\ln(U_i(t))\big)=L^0\big(\ln(U_i(t)\big)dt+\sumj L^j \big(\ln(U_i(t))\big) dW_j(t)
$$
where 
$$
L^0(\cdot):=\sum_{k=0}^m a_k(\vec{U})U_k \der{k} (\cdot) + \dfrac{1}{2} \sum_{j=1}^d b_i^2 (\vec{U})U_i^2 \secder{jj} (\cdot)
$$
and
$$
L^j(\cdot):= b_j(\vec{U})U_j \der{j} (\cdot) .
$$
As a result, we have system of SDE's for $i\in\NZ{d}$
$$
\ln(U_i(t_{n+1}))=\ln(U_i(t_{n}))+ \intn{t_{n+1}} \big(a_i(\vec{U}(s)) -\dfrac{1}{2} b_i(\vec{U} (s))^2 \big) ds+ \intn{t_{n+1}}  b_i(\vec{U} (s))dW_i(s).
$$
By approximating the integrands of each integral at $t_n$
we obtain the exponential Euler scheme
\begin{equation} \label {eq:expeuler_component}
V^{EE}_{i,n+1}:=\exp(\mathcal{M}^{EE} _{i,n}) V^{EE}_{i,n}; \ \mathcal{M}^{EE} _{i,n}:=\l(a_i(\vec{V}^{EE}_n)-\dfrac{1}{2}  b_i^2(\vec{V}^{EE}_n) \r) \Dt+ b_i(\vec{V}^{EE}_n) \Delta W_{i,n}
\end{equation}
which was already derived in \cite{bossy2021} with a different approach for a scalar SDE.
To obtain higher order methods, consider the Ito-Taylor expansions of the drift 
\begin{align*} \label{eq:expansionsComponentDrift}
a_i(\vec{U}(s))-\dfrac{1}{2} b_i^2(\vec{U}(s))&=a(\vec{U}(t_n))-\dfrac{1}{2} b^2(\vec{U}(t_n))+ \intn{s} L^0 (a(\vec{U}(r))-\dfrac{1}{2}  b^2(\vec{U}(r)))dr\\
&+ \sum_{j=1}^d \intn{s} L^j (a(\vec{U}_r)-\dfrac{1}{2}  b_i^2(\vec{U}_r))dW_j (r) + h.o.t. \numberthis 
\end{align*}
and diffusion terms 
\begin{align} \label{eq:expansionsComponentDiff}
    b_i(\vec{U}(s))&=b_i(\vec{U}(t_n))+ \intn{s} L^0 (b_i(\vec{U}(r)))dr+ \sum_{j=1}^d \intn{s} L^j (b_i(\vec{U}(r))dW_j (r) + h.o.t.
\end{align}
where $h.o.t.$ stands for higher order terms.
By substituting \eqref{eq:expansionsComponentDrift} and \eqref{eq:expansionsComponentDiff} into  \eqref{eq:LogarithmicFormComponent}  repeatedly and dropping the nested integrals of higher order 
we obtain the exponential Milstein method
\begin{equation} \label {eq:expMilComponent}
V^{EMil}_{i,n+1}=\exp(\mathcal{M}^{EMil} _{i,n}) V^{EMil}_{i,n},
\end{equation}
where 
\begin{align*}
 \mathcal{M}^{EMil} _{i,n}&=\l(a_i(\vec{V}^{EMil}_{n})-\dfrac{1}{2}  b_i^2(\vec{V}^{EMil}_{n}) \r) \Dt+ b_i(\vec{Y}^{EMil}_{n}) \Delta W_{i,n}\\
 &+ \sum_{j=1}^d  b_j(\vec{V}^{EMil}_{n}) V^{EMil}_{j,n} \der{j} b_i(\vec{V}^{EMil}_{n}) \intn{t_{n+1}} \intn{s} dW_j(r) dW_i (s).
\end{align*}
This in general requires the computation of Levy areas. However, under the assumption that $b_i$ depends only on the $i$th component of the process the $\mathcal{M}^{EMil} _{i,n}$ reduces to  
\begin{align*}
 \mathcal{M}^{EMil}_{i,n}&=\l(a_i(\vec{V}^{EMil}_{n})-\dfrac{1}{2}  b_i^2(\vec{V}^{EMil}_{n}) \r) \Dt+ b_i(\vec{V}^{EMil}_{n}) \Delta W_{i,n}\\
 &+  \dfrac{1} {2} b_i(\vec{V}^{EMil}_{n}) V^{EMil}_{i,n} \der{i} b_i(\vec{V}^{EMil}_{n}) \l(\Delta W^2_{i,n} -\Dt \r)
\end{align*}
as for the classical Milstein scheme \cite{kloeden2011}.

\subsection{From positivity preservation to domain preservation}
\label{sec:DomPresMethod}
We now turn attention on how to preserve the domain $D$ and consider the SDE \eqref{eq: vectorSDE}. 

First let us examine how to preserve the boundary at $\vec{L}$ of the hypercube. Define component wise the function
\begin{equation}
    \label{eq:NewtonQuotientLEFT}
\flq(\vec{y}):=\dfrac{\fc(\vec{y}) -\fc(\ProjL(\vec{y}))}{(y_i-L_i)}, \qquad i\in\NZ{d}.
\end{equation}
We re-write the SDE \eqref{eq: componentSDE}
as the following system on the interval $[t_n,t_{n+1}]$, 
\begin{equation}\label{eq:splitODESDE}
\begin{split}
d X_i  (t)&=\fc(\ProjL (\vec{X} (t)))dt  \\
d X_i (t)&=F_i^L(\vec{X}(t)) (X_i(t)-L_i)dt+G_i(\vec{X}(t))d\Wc(t) .
\end{split}
\end{equation}
We use this to construct an approximation $Y_i^L(t)$ that preserves the left boundary at $L_i$ on the interval $t\in[t_n,t_{n+1}]$ given $\vec{Y}(t_n) =\vec{Y}_n\in D$.

For the ODE in \eqref{eq:splitODESDE} we apply the standard explicit Euler method and use this as initial data for the SDE. That is we have 
\begin{equation}
d \xlbc(t) =\flq(\xnum _n) (\xlbc(t)-L_i)dt+\gc(\xnum _n) (R_i-\xnumc_{i,n})(\xlbc(t)-L_i)d\Wc(t) ,\label{eq:left_cont_SDE}
\end{equation}
with initial data $Y_i^L(t_n) = Y_{i,n}+ \Dt f_i(\ProjL(\vec{Y}_n))$. 
We transform to $Y_i^L-L_i$ and apply the exponential Euler scheme \eqref{eq:expeuler_component} to get
\begin{equation}\label{eq: left_schema}
\xlbc (t_{n+1})=L_i+\exp\Big(\alpha^L_{i}(\xnum _n) \Dt+\beta^L_{i} (\xnum _n)\DWnc\Big)\big(\xnumc_{i,n}+\fc(\ProjL (\xnum _n)) \Dt -L_i\big)
\end{equation}
where   
\begin{equation}
\label{eq:L_variables}
\alpha^L_{i}(\vec{y})  :=\flq(\vec{y}) -\dfrac{1}{2} \big(\gc(\vec{y}) (R_i-y_{i}) \big)^2 \quad \text{and} \quad 
\beta^L_{i} (\vec{y})  :=\gc(\vec{y}) (R_i-y_{i}).
\end{equation}
By construction $\xlbc (t_{n+1})-L_i>0$ given $\vec{Y}_n\in D$ and we have preserved the boundary at $\vec{L}$.

A similar approach works for the right boundary $R_i$. Define for $i\in\NZ{d}$
\begin{equation}
    \label{eq:NewtonQuotientRIGHT}
\frq(\vec{y}):=\dfrac{\fc(\vec{y}) -\fc(\ProjR(\vec{y}) )}{(y_i-R_i)}.
\end{equation}
Rewrite the SDE as the following system on $[t_n,t_{n+1}]$
\begin{equation}\label{eq:splitODESDE_right}
\begin{split}
d X_i  (t)&=\fc(\ProjR (\vec{X} (t)))dt  \\ 
d X_i (t)&=F_i^R(\vec{X}(t)) (R_i-X_i(t))dt+G_i(\vec{X}(t))d\Wc(t) .
\end{split}
\end{equation}
we work with  the SDE 
\begin{equation}
d \xrbc(t) =\frq(\xnum _n) (R_i-\xrbc(t))dt+\gc(\xnum _n) (R_i-\xrbc(t))(\xnumc_{i,n}-L_i)d\Wc(t) ,\label{eq:right_cont_SDE}
\end{equation}
with initial data $Y_i^R(t_n) = Y_{i,n}+\Dt f_i(\ProjR(\vec{Y}_n))$  for the right boundary. 

By the transformation $R_i-\xrbc(t)$ 
we can again apply the exponential Euler scheme \eqref{eq:expeuler_component} to obtain
\begin{equation}\label{eq: right_schema}
\xrbc (t_{n+1})=R_i-\exp\big(\alpha^R_{i}(\xnum _n) \Dt+\beta^R_{i}(\xnum _n)\DWnc\big)(R_i-\xnumc_{i,n}-\fc(\ProjR(\xnum _n)) \Dt)
\end{equation}
where 
\begin{equation} \label{eq:R_variables}
\alpha^R_{i}(\vec{y}):=\frq(\vec{y}) -\dfrac{1}{2} \l(\gc(\vec{y}) (y_{i}-L_i) \r)^2 \quad \text{and} \quad \beta ^R_{i}(\vec{y}):=-\gc(\vec{y})(y_{i}-L_i).
\end{equation}

We now prove that the two approximations in \eqref{eq: left_schema} and \eqref{eq: right_schema} satisfy that $Y_i^L(t_{n+1})>L_i$ and $Y_i^R(t_{n+1})<R_i$ and then introduce our numerical method to approximate the solution of \eqref{eq: vectorSDE}.
\begin{lemma} \label{lem: impossibility} Let Assumption \ref{ass:fg} hold and
$L_i < \xnumc_{i,n} < R_i$. Suppose $Y_i^L(t_{n+1})$ is given by \eqref{eq: left_schema} and $Y_i^R(t_{n+1})$ given by \eqref{eq: right_schema}. Then for all $i\in\NZ{d}$
\begin{itemize}
\item [i)] $\forall \omega \in \Omega$,  $\xlbc (t_{n+1} ;\omega)  >L_i $ and $\xrbc (t_{n+1};\omega)   <R_i$
\item [ii)] $\lbrace \omega \in \Omega \Big \vert  \xlbc (t_{n+1} ;\omega)  >R_i \quad  \text{and} \quad \xrbc (t_{n+1} ;\omega)  <L_i   \rbrace =\emptyset $.
\end{itemize}
\end{lemma}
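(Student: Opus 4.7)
For part (i) the argument is direct. The exponential factor in \eqref{eq: left_schema} is strictly positive almost surely, so the sign of $\xlbc(t_{n+1})-L_i$ is determined by $(Y_{i,n}-L_i)+\Dt\,\fc(\ProjL(\xnum_n))$; both summands are non-negative, the first by the hypothesis $L_i<Y_{i,n}<R_i$ and the second by the boundary inequality \eqref{eq: boundary_ineq}. Hence $\xlbc(t_{n+1})>L_i$ pathwise. The symmetric reasoning applied to \eqref{eq: right_schema}, using $\fc(\ProjR(\xnum_n))\leq 0$, gives $\xrbc(t_{n+1})<R_i$.

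For part (ii) my plan is to construct a scalar quantity built from the two approximations that is independent of $\DWnc$, so that a deterministic bound can rule out a simultaneous violation. Write $a:=Y_{i,n}-L_i>0$, $b:=R_i-Y_{i,n}>0$, $c:=\fc(\ProjL(\xnum_n))\geq 0$, and $d:=-\fc(\ProjR(\xnum_n))\geq 0$. From \eqref{eq:L_variables}--\eqref{eq:R_variables} one reads off the cancellation
\begin{equation*}
a\,\beta^L_i(\xnum_n)+b\,\beta^R_i(\xnum_n)=a\,g_i(\xnum_n)\,b+b\bigl(-g_i(\xnum_n)\,a\bigr)=0,
\end{equation*}
so the weighted geometric product $\bigl(\xlbc(t_{n+1})-L_i\bigr)^a\bigl(R_i-\xrbc(t_{n+1})\bigr)^b$ has no $\DWnc$ contribution in its exponent and is deterministic. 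A short computation with the Newton-quotient identity $a\,\flq(\xnum_n)+b\,\frq(\xnum_n)=-(c+d)$ (immediate from \eqref{eq:NewtonQuotientLEFT}--\eqref{eq:NewtonQuotientRIGHT}) then yields
\begin{equation*}
\bigl(\xlbc(t_{n+1})-L_i\bigr)^a\bigl(R_i-\xrbc(t_{n+1})\bigr)^b=\exp\!\Bigl(-(c+d)\Dt-\tfrac{1}{2}g_i(\xnum_n)^2 ab(a+b)\Dt\Bigr)(a+c\Dt)^a(b+d\Dt)^b.
\end{equation*}

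The diffusion factor is at most $1$, so it suffices to prove $\exp(-(c+d)\Dt)(a+c\Dt)^a(b+d\Dt)^b\leq(a+b)^{a+b}$. Taking logarithms this reduces to $\psi(\Dt)\geq 0$ where
\begin{equation*}
\psi(s):=(a+b)\ln(a+b)+(c+d)s-a\ln(a+cs)-b\ln(b+ds).
\end{equation*}
I would verify $\psi(0)\geq 0$ via Jensen's inequality for the concave function $\ln$ with weights $a/(a+b)$ and $b/(a+b)$, and compute $\psi'(s)=\tfrac{c^2 s}{a+cs}+\tfrac{d^2 s}{b+ds}\geq 0$, so $\psi$ is non-decreasing from a non-negative starting value. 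With the pathwise bound $\bigl(\xlbc(t_{n+1})-L_i\bigr)^a\bigl(R_i-\xrbc(t_{n+1})\bigr)^b\leq(a+b)^{a+b}$ secured, any $\omega$ on which $\xlbc(t_{n+1};\omega)>R_i$ and $\xrbc(t_{n+1};\omega)<L_i$ would force this product to strictly exceed $(a+b)^a(a+b)^b=(a+b)^{a+b}$, a contradiction; hence the set in (ii) is empty.

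The main obstacle I anticipate is spotting the two algebraic cancellations $a\beta^L_i+b\beta^R_i=0$ and $a\flq+b\frq=-(c+d)$: the first is what makes the comparison independent of $\DWnc$, and the second is what makes the resulting deterministic upper bound match an elementary one-variable inequality. Once those identities are in hand, the rest is a monotonicity calculation for $\psi$.
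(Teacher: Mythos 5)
Your proof of part (i) and the overall mechanism of part (ii) coincide with the paper's: the whole argument turns on weighting the left and right flows by $a=\xnumc_{i,n}-L_i$ and $b=R_i-\xnumc_{i,n}$ so that $a\,\beta^L_i+b\,\beta^R_i=0$ kills the $\DWnc$ contribution, combined with the Newton-quotient identity $aF^L_i+bF^R_i=f_i(\ProjR(\xnum_n))-f_i(\ProjL(\xnum_n))$ and the elementary inequality $a\ln a+b\ln b\le(a+b)\ln(a+b)$. The paper runs this additively (multiplying the two logarithmic inequalities obtained from the contradiction hypothesis by $a$ and $b$ and summing), while you run it multiplicatively by bounding the product $(\xlbc(t_{n+1})-L_i)^a(R_i-\xrbc(t_{n+1}))^b$; these are the same computation in different clothing. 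The one substantive difference is your treatment of the drift offsets $c\Dt$ and $d\Dt$ in the initial factors: the paper discards them in passing from \eqref{eq:false_assumptions} to \eqref{eq:intermediate_ineq}, which as written replaces $\xnumc_{i,n}+\fc(\ProjL(\xnum_n))\Dt-L_i$ by the \emph{smaller} quantity $\xnumc_{i,n}-L_i$ and so weakens the hypothesis in the wrong direction; your auxiliary function $\psi$, with $\psi(0)\ge 0$ by Jensen and $\psi'(s)=\tfrac{c^2s}{a+cs}+\tfrac{d^2s}{b+ds}\ge 0$, carries these terms through rigorously and reaches the same contradiction. So your route is essentially the paper's, but with a cleaner and fully justified handling of the drift-shifted initial data, at the cost of one extra monotonicity computation.
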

\begin{proof}
Due  to conditions \eqref{eq: boundary_ineq} on the boundaries, i) is obvious from the construction.  We prove ii) by contradiction. Suppose there exist a $\omega \in \Omega$ such that 
\begin{equation}\label{eq:false_assumptions}
 \xlbc (t_{n+1} ;\omega) >R_i \quad \text{and} \quad \xrbc (t_{n+1} ;\omega)  <L_i  
\end{equation}  
for $L_i < \xnumc_{i,n} (\omega)< R_i$. 
For ease of the notation, let us to skip the $\omega$ symbol. 
By  \eqref{eq: boundary_ineq} on the boundaries, the inequalities \eqref{eq:false_assumptions} implies that
\begin{align} \label{eq:intermediate_ineq}
\alpha_{i} ^L (\xnum_{n})\Dt+ \beta^L_{i} (\xnum_{n})\DWnc > - \ln \l( \dfrac{\xnumc_{i,n} -L_i}{R_i-L_i} \r) \nonumber \\
\alpha_{i} ^R(\xnum_{n}) \Dt+ \beta^R_{i} (\xnum_{n})\DWnc > - \ln \l( \dfrac{R_i-\xnumc_{i,n} }{R_i-L_i} \r)
\end{align}
where  the functions $\alpha_{i} ^L,\alpha_{i}^R,\beta_{i}^L,\beta_{i}^R$ are defined in \eqref{eq:L_variables} and \eqref{eq:R_variables}. 

By multiplying the first and second  inequalities in \eqref{eq:intermediate_ineq} by $\xnumc_{i,n}-L_i$ and $R_i-\xnumc_{i,n}$ respectively, adding together and multiplying the resulting inequality by (-1), we get the following inequality
\begin{align*}
\l(\fc(\ProjL(\xnum_{n}))  -\fc(\ProjR(\xnum_{n}))\r)\Dt&+\dfrac{1}{2} \gc (\xnum_{n})^2(\xnumc_{i,n}-L_i)(R_i-\xnumc_{i,n}) \Dt \\
& < \ln\l( \frac{(\xnumc_{i,n}-L_i) ^ {\xnumc_{i,n}-L_i} (R_i-\xnumc_{i,n}) ^ {R_i-\xnumc_{i,n}}} {(R_i-L_i)^{R_i-L_i}} \r) \\
&< \ln\l( \frac{(R_i-L_i) ^ {\xnumc_{i,n}-L_i} (R_i-L_i) ^ {R_i-\xnumc_{i,n}}} {(R_i-L_i)^{R_i-L_i}} \r)\\
& = \ln (1)=0.
\end{align*}
This gives a contradiction as the left hand side is always positive.
\end{proof}
Lemma \ref{lem: impossibility} motivates our new time stepping method.
Given an approximation $\xnum_n\in D$ to $\vec{X}(t_n)\in D$ 
then by \eqref{eq: left_schema} we find $\xnumc^L_{i}(t_{n+1})$
and by \eqref{eq: right_schema} we find $\xnumc^R_{i}(t_{n+1})$ for $i\in\NZ{d}.$
We also define the linear combination $ \xnumc^{\theta}_{i}$ of $\xnumc^L_{i}$ and $\xnumc^L_{i}$ 
\begin{equation}
\label{eq: theta update}
 \xnumc^{\theta}_{i}(t_{n+1}) :=(1-\theta_{i,n+1}) \xnumc^L _{i}(t_{n+1})+\theta_{i,n+1} \xnumc ^R_{i}(t_{n+1}), \qquad 0<\theta_{i,n+1} <1 ,
\end{equation}
$\theta_{i,n+1}$ is a parameter that may change for $n\in \NZ{N}$ and each $i\in\NZ{d}$. 

We then approximate the solution to the SDE \eqref{eq: componentSDE} $X_i(t_{n+1})$ by $\xnumc_{i,n+1}^{M_{i,n+1}}$ where 
\begin{align*}
\label{eq:method}
\xnumc_{i,n+1}^{M_{i,n+1}} = & \xnumc ^L _{i}(t_{n+1}) \mathbf{1}_{\{\xnumc ^R _{i}(t_{n+1}) \leq L_i \}} + \xnumc ^R _{i}(t_{n+1}) \mathbf{1}_{\{ \xnumc ^L _{i}(t_{n+1}) \geq R_i \}} \\ 
& +\xnumc ^{\theta} _{i}(t_{n+1}) \mathbf{1}_{\{\xnumc ^R _{i}(t_{n+1})>L_i  \text{,} \xnumc ^L _{i}(t_{n+1})<R_i \} } \numberthis.
\end{align*}
The superscript $M_{i,n+1}\in \{L,R,\theta\}$ is used to keep track of which : $\xnumc^L_{i}$,  $\xnumc^R_{i}$ or $\xnumc^\theta_{i}$ is used in \eqref{eq:method}.
We also introduce the vector notation $\vec{M}_n :=(M_{1,n},M_{2,n},\hdots,M_{d,n})$ 
such that 
$\xnum^{\vec{M}_n} (t_n) :=(\xnumc_1 ^{M_{1,n}} (t_n),\xnumc_2 ^{M_{2,n}} (t_n),\hdots, \xnumc_d ^{M_{d,n}} (t_n) )$.

%

\begin{corollary}\label{cor:boundedness_numerics}
Let Assumptions \ref{ass:fg} and \ref{ass:X0} hold. Then the numerical approximation to \eqref{eq: vectorSDE} defined in \eqref{eq:method} satisfies 
$\vec{Y}_{n}^{\vec{M}_{n}} \in D$ for all $n\in\NZ{N}$.
\end{corollary}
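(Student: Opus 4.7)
I would prove the corollary by a straightforward induction on $n$. The base case $n=0$ is immediate from Assumption \ref{ass:X0}, which gives $\vec{Y}_0 = \vec{X}_0 \in D$. For the inductive step, I would assume $\vec{Y}_n^{\vec{M}_n} \in D$, i.e.\ $L_i < Y_{i,n} < R_i$ for every $i \in \NZ{d}$, and then verify componentwise that $Y_{i,n+1}^{M_{i,n+1}} \in (L_i, R_i)$. Since the scheme acts on each index independently, I fix $i$ and analyze the three mutually exclusive branches in \eqref{eq:method} using Lemma \ref{lem: impossibility}.

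Part (i) of Lemma \ref{lem: impossibility} provides $\xlbc(t_{n+1}) > L_i$ and $\xrbc(t_{n+1}) < R_i$ unconditionally. When the convex-combination branch is active, that is when $\xrbc(t_{n+1}) > L_i$ and $\xlbc(t_{n+1}) < R_i$, both auxiliary values lie in $(L_i, R_i)$, and so does $\xnumc^{\theta}_i(t_{n+1}) = (1-\theta_{i,n+1}) \xlbc(t_{n+1}) + \theta_{i,n+1} \xrbc(t_{n+1})$ because $\theta_{i,n+1} \in (0,1)$; this case is immediate. The two fallback branches are symmetric: if $\xrbc(t_{n+1}) \leq L_i$ the scheme returns $\xlbc(t_{n+1})$, whose lower bound is supplied by Lemma \ref{lem: impossibility}(i) and whose upper bound $\xlbc(t_{n+1}) < R_i$ is precisely what Lemma \ref{lem: impossibility}(ii) forbids from being violated. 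Swapping the roles of $L_i$ and $R_i$ handles the branch $\xlbc(t_{n+1}) \geq R_i$. Collecting the three cases across all $i \in \NZ{d}$ yields $\vec{Y}_{n+1}^{\vec{M}_{n+1}} \in D$, closing the induction.

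The only delicate point I would want to reconcile is the mismatch between the strict inequalities in the statement of Lemma \ref{lem: impossibility}(ii) and the non-strict thresholds $\{\xrbc \leq L_i\}$ and $\{\xlbc \geq R_i\}$ that trigger the fallback branches in \eqref{eq:method}: the corollary asks for membership in the open hypercube $D$, whereas the lemma literally rules out only strict-strict crossover. To close this gap I would revisit the final contradiction chain in the proof of Lemma \ref{lem: impossibility} and check that it still produces a strict inequality when one of the two assumed inequalities is weakened to equality. The key observation is that the term $\tfrac{1}{2} \gc(\xnum_n)^2 (\xnumc_{i,n}-L_i)(R_i-\xnumc_{i,n}) \Dt$ on the left-hand side of that chain is strictly positive under the inductive hypothesis $\xnumc_{i,n} \in (L_i, R_i)$, so strictness is preserved throughout. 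Beyond this minor bookkeeping I do not anticipate any real obstacle.
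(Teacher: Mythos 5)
Your proof is correct and follows essentially the same route as the paper, whose entire argument is ``induction plus Lemma \ref{lem: impossibility}''; your case analysis of the three branches of \eqref{eq:method} is simply the expanded version of that one line. Your observation about the strict/non-strict mismatch between Lemma \ref{lem: impossibility}(ii) and the thresholds $\{\xrbc \leq L_i\}$, $\{\xlbc \geq R_i\}$ is a genuine subtlety the paper glosses over, and your proposed fix (rerunning the contradiction chain with one inequality weakened to equality) does work — but not for the reason you give: the term $\tfrac{1}{2}\gc(\xnum_n)^2(\xnumc_{i,n}-L_i)(R_i-\xnumc_{i,n})\Dt$ is only nonnegative, since $\gc$ may vanish. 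The strictness that saves the argument comes instead from the second inequality in that chain, $\ln\bigl((\xnumc_{i,n}-L_i)^{\xnumc_{i,n}-L_i}(R_i-\xnumc_{i,n})^{R_i-\xnumc_{i,n}}/(R_i-L_i)^{R_i-L_i}\bigr) < \ln(1)$, which is strict precisely because the inductive hypothesis places $\xnumc_{i,n}$ strictly inside $(L_i,R_i)$.
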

\begin{proof}
The result follows from Lemma \ref{lem: impossibility} and by induction.     
\end{proof}

In Section \ref{sec:StrongConv}, we prove strong convergence of the scheme. Additionally, we discuss a strategy for finding a value of $\theta_{i,n}$ 
to reduce the local truncation error at each step.

\subsection{Milstein version of the domain preserving method}
\label{sec:Milstein}
We denote  by $\Ymil_n$ the Milstein approximation to $X(t_n)$. To obtain a higher order method 
we construct Milstein versions of left and right numerical flows
by applying positivity preserving Milstein discretization \eqref {eq:expMilComponent} to the SDE's
$$
d X_i (t)=F_i^L(\vec{X}(t)) (X_i(t)-L_i)dt+G_i(\vec{X}(t))dW_i(t)
$$
and 
$$
d X_i (t)=F_i^R(\vec{X}(t)) (R_i-X_i(t))dt+G_i(\vec{X}(t))dW_i(t)
$$
on the interval $[t_n,t_{n+1}]$ with the initial conditions   $\Ymil_i^L(t_n) = \Ymil_{i,n}+\Dt f_i(\ProjL(\vec{\Ymil}_n))$ and $\Ymil_i^R(t_n) = \Ymil_{i,n}+\Dt f_i(\ProjR(\vec{\Ymil}_n))$ under the transformations $U_i=X_i-L_i$ and $U_i=R_i-X_i$. If we assume that $g_i(\vec{y})=\bar{g}_i(y_i)$, the resulting left and right flows are given by
\begin{multline*}
\ylbc(t_{n+1}) =L_i \\
  +\exp\Big(\alpha^L_{i}(\ynum _n) \Dt+\beta^L_{i} (\ynum _n)\DWnc + \gamma^ L_{i}(\ynum _n)(\DWnc ^2 -\Dt)\Big) \big(\ynumc_{i,n}+\fc(\ProjL (\ynum _n)) \Dt -L_i\big)
\end{multline*}
and 
\begin{multline*}
\yrbc (t_{n+1})=R_i \\ 
-\exp\big(\alpha^R_{i}(\ynum _n) \Dt+\beta^R_{i}(\ynum _n)\DWnc+\gamma^ R_{i}(\ynum _n)(\DWnc ^2 -\Dt)\big)(R_i-\ynumc_{i,n}-\fc(\ProjR(\ynum_n)) \Dt)
\end{multline*}
where   
\begin{align*}
\gamma^L_{i}(\vec{y}) & :=\dfrac{1}{2}\bar{g}_i(y_i)(R_i-y_i)(y_i-L_i)\Bigl( \bar{g}'_i(y_i)(R_i-y_i) -\bar{g}_i(y_i) \Bigr) \\
\gamma^R_{i}(\vec{y}) & :=-\dfrac{1}{2}\bar{g}_i(y_i)(R_i-y_i)(y_i-L_i)\Bigl( \bar{g}'_i(y_i)(y_i-L_i) +\bar{g}_i(y_i) \Bigr)
\end{align*}
and $\alpha^L_{i}$, $\beta^L_{i}$, $\alpha^R_{i}$, $\beta^R_{i}$ are same as given in \eqref{eq:L_variables},\eqref{eq:R_variables}. Same as in \eqref{eq: theta update}, the theta flow is given by
\begin{equation}
\label{eq: theta update_mil}
 \Ymil^{\theta}_{i,n+1} :=(1-\theta_{i,n+1}) \Ymil^L _{i,n+1}+\theta_{i,n+1} \Ymil ^R_{i,n+1}        , \qquad 0<\theta_{i,n+1} <1 .
\end{equation}
This results in the following method where $\theta_{i,{n+1}}$ is yet to be determined.
\begin{align*}
\label{eq:method_milstein}
\Ymil_{i,n+1}^{M_{i,n+1}} = & \Ymil ^L _{i,n+1} \mathbf{1}_{\{\Ymil ^R _{i} \leq L_i \}} + \Ymil ^R _{i} \mathbf{1}_{\{ \Ymil ^L _{i,n} \geq R_i \}} \\ 
& +\Ymil ^{\theta} _{i,n} \mathbf{1}_{\{\Ymil ^R _{i,n+1}>L_i  \text{,} \Ymil ^L _{i,n+1}<R_i \} } \numberthis.
\end{align*}

\section{Convergence analysis of Euler method and parameter selection}
\label{sec:StrongConv}
For our convergence analysis, we define the pathwise error for the $ith$ component at time level $n$ by 
\begin{equation} \label{eq: error_definition_component}
E_{i,n}:=X_i (t_n)-\xnumc_{i,n}^{M_{i,n}}
\end{equation}
with $\MMin \in \{L,R,\theta \}$ 
and define corresponding vectors 
\begin{equation} \label{eq: error_definition_vector}
\vec{E}_{n}:=\vec{X}(t_n)-\xnum_n^{\vec{M}_n}.
\end{equation}

\subsection{Ito-Taylor expansions}
We begin with giving the Ito-Taylor expansions of numerical the approximation over the interval $[t_n,t_{n+1}]$ in \eqref{eq:method} and then give the expansion for the solution to \eqref{eq: vectorSDE}. For \eqref{eq:method} we give the expansions for ${Y}_i^L(t)$,  ${Y}_i^R(t)$ and  then ${Y}_i^\theta(t)$. 
We assume that at time $t_n$ $\vec{X}(t_n)\in D$ and we are given $\bfzeta \in D$ so that $\bfzeta \approx \vec{X}(t_n)$ and in our error analysis below we have $\bfzeta=\xnum^{\mathbf{M}_{n}}$. 
The expansions can then be used to obtain one step errors when $\MMin=L$ and the update from \eqref{eq: left_schema} is used in \eqref{eq:method}; when  $\MMin=R$ (\eqref{eq: right_schema} is used in \eqref{eq:method}) and when $\MMin=\theta$ so \eqref{eq: theta update} is used in \eqref{eq:method}.

All our Ito-Taylor expansions for the numerical method have remainder terms of the form
\begin{align}
\label{eq:remainder}
    r_i^{M_{i,n+1}}(t_n,t_{n+1})=& r_{11}^\MMin + r_{10}^\MMin
    + r_{01}^\MMin + r_{00}^\MMin,
\end{align}
where the four terms  $r_{11}^\MMin$, $r_{10}^\MMin$, $r_{01}^\MMin$ and $r_{00}^\MMin$ are given by 
\begin{align*}
r_{11}^\MMin :=& \intn{t_{n+1}} \intn{s} I_{11}^\MMin dW_i(\tau) dW_i(s) ; \\
r_{10}^\MMin := & \intn{t_{n+1}} \intn{s} I_{10}^\MMin dW_i(\tau) ds; \\
r_{01}^\MMin :=& \intn{t_{n+1}} \intn{s} I_{01}^\MMin d\tau dW_i(s) +\intn{t_{n+1}} \intn{s} J_{01}^{M_{i,n+1}} d\tau dW_i(s);\\
r_{00}^\MMin := &\intn{t_{n+1}} \intn{s} I_{00}^\MMin d\tau ds
+ \intn{t_{n+1}} \intn{s}  J_{00}^{M_{i,n+1}} d\tau ds;
\end{align*}
and $I_{11}^\MMin, I_{10}^\MMin, I_{01}^\MMin, I_{00}^\MMin$ and $J_{01}^\MMin$ and $J_{00}^\MMin$ are defined below for the different cases of $\MMin$.

We start by considering when $\MMin=L$.
Consider the integral representation of \eqref{eq:left_cont_SDE} for $t\in[t_n,t_{n+1}]$ 
\begin{align*}\label{eq:leftflowIto2} 
 \xlbc(t)=&\zeta_{i}+\fc(\ProjL (\bfzeta))\Dt 
 + \intn{t} \flq(\bfzeta) \l(   \xlbc(s)-L_i \r) ds \\
 &+ \intn{t} g_i (\bfzeta) \l(R_i-\zeta_{i} \r) \l(  \xlbc(s)-L_i \r)
 dW_i (s). \numberthis
\end{align*}
Evaluating \eqref{eq:leftflowIto2} at $t_{n+1}$ and substituting in the same expansion for $\xlbc(s)$ in \eqref{eq:leftflowIto2} we get
\begin{align*} \label{eq:left_expansion}
  \xlbc(t_{n+1})=&\zeta_{i}+  \intn{t_{n+1}} f_i(\bfzeta)  ds + 
 \intn{t_{n+1}}  G_i(\bfzeta)dW_i (s) +r^L_i (t_n,t_{n+1}) \numberthis
\end{align*}
where $r^L_i$ is defined as in \eqref{eq:remainder} with 
\begin{equation}\label{eq:remainder_left}
\begin{cases}
I_{11}^L & := \l( g_i (\bfzeta) \l(R_i-\zeta_{i} \r) \r)^2  \l(   \xlbc(\tau)-L_i \r) \\
I_{10}^L & :=\flq(\bfzeta) g_i (\bfzeta) \l(R_i-\zeta_{i} \r)   \l(   \xlbc(\tau)-L_i \r) \\
I_{01}^L & := \flq(\bfzeta) g_i (\bfzeta) \l(R_i-\zeta_{i} \r)   \l(   \xlbc(\tau)-L_i \r) \\
J_{01}^L & := g_i(\bfzeta)(R_i-\zeta_{i})\fc(\ProjL (\bfzeta)) \\
I_{00}^L & := \flq(\bfzeta) ^2  \l(   \xlbc(\tau)-L_i \r) \\
J_{00}^L & := \flq(\bfzeta) \fc(\ProjL (\bfzeta)).
\end{cases}
\end{equation}
When $M_{i,n+1}=R$, we have from a similar analysis that  
\begin{align*}\label{eq:right_expansion}
 \xrbc(t_{n+1}) = & \zeta_{i}+ \intn{t_{n+1}} f_i(\bfzeta)  ds + 
 \intn{t_{n+1}} G_i(\bfzeta) dW_i(s) +r^R_i (t_n,t_{n+1}) \numberthis
\end{align*}
where  
\begin{equation}\label{eq:remainder_right}
\begin{cases}
I_{1,1}^R & :=   \l( g_i (\bfzeta) \l(\zeta_{i}-L_i \r) \r)^2  \l(  R_i- \xrbc( \tau) \r)\\
I_{1,0}^R & := \frq(\bfzeta) g_i (\bfzeta) \l(\zeta_{i}-L_i \r)   \l( R_i-  \xrbc( \tau)  \r) \\
I_{0,1}^R & := \frq(\bfzeta) g_i (\bfzeta) \l(\zeta_{i}-L_i \r)   \l( R_i - \xrbc( \tau) \r) \\
J_{0,1}^R & :=  g_i(\bfzeta)(\zeta_{i}-L_i) \fc(\ProjR (\bfzeta))\\
I_{0,0}^R & :=  \frq(\bfzeta) ^2  \l( R_i - \xrbc(\tau) \r) \\
J_{0,0}^R & := \frq(\bfzeta) \fc(\ProjR (\bfzeta)).
\end{cases}
\end{equation}
For the linear combination, $M_{i,n+1}=\theta$, we have 
\begin{align} \label{eq: theta_expansion}
 \xthetac(t_{n+1})=\zeta_{i}+  \intn{t_{n+1}} f_i(\bfzeta)  ds +
 \intn{t_{n+1}} G_i(\bfzeta)dW_i (s)
 +r^{\theta}_i (t_n,t_{n+1})
\end{align}
and the remainder term is the linear combination 
$$
r^{\theta}_i(t_n,t_{n+1}) := (1-\theta_{i,n+1}) r^L_i(t_n,t_{n+1})+ \theta_{i,n+1} r^R_i(t_n,t_{n+1}).
$$
Finally we provide the Ito-Taylor expansion for the solution to the domain preservation SDE \eqref{eq: vectorSDE} on $[t_n,t_{n+1}]$
\begin{align*} \label{eq:exact_expansion}
X_i(t_{n+1} )= & X_i(t_n)+ \intn{t_{n+1}} f_i (\vec{X}(t_n))ds  + \intn{t_{n+1}}  
G_i(\vec{X}(t_n)) dW_i(s) + r^{X}_i(t_n,t_{n+1}) \numberthis
\end{align*}
where $r_i^{X} := r_{11}^{X} + r_{10}^{X}
    + r_{01}^{X} + r_{00}^{X},$ with
\begin{align*}
r^{X}_{11}&=\sumj \intn{t_{n+1}} \intn{s}  I^{X}_{11,j} dW_j (\tau) dW_i(s);\qquad  &
r^{X}_{10}=\sumj \intn{t_{n+1}} \intn{s}   I^{X}_{10,j} dW_j (\tau)  ds;\\
r^{X}_{01}&=\sumj \intn{t_{n+1}} \intn{s} I^{X}_{01,j} d\tau dW_i (s);  & 
r^{X}_{00}=\sumj \intn{t_{n+1}} \intn{s} I^{X}_{00,j} d\tau ds;
\end{align*}
and 
\begin{align*}\label{eq:remainder_exact}
I^{X}_{11,j} & :=  G_j(\vec{X}(\tau)) \der{j}\l(G_i (\vec{X}(\tau) \r)  \\
I^{X}_{10,j} & := G_j(\vec{X}(\tau)) \der{j} \l(f_i (\vec{X}(\tau)) \r)  \\
I^{X}_{01,j} & :=  \l( f_j(\vec{X}(\tau))  \der{j}G_i(\vec{X}(\tau) )  + \dfrac{1}{2} G^2_j (\vec{X}(\tau))  \secder{j} \l(G_i(\vec{X}(\tau) ) \r)   \r)  \\
I^{X}_{00,j} & := \l( f_j(\vec{X}(\tau))  \der{j}f_i(\vec{X}(\tau))  + \dfrac{1}{2} (G^2_j(\vec{X}(\tau))  \secder{j} \l(f_i(\vec{X}(\tau)) \r)   \r). 
\end{align*}


\subsection{Strong convergence}
We use the Ito-Taylor expansions to prove a strong convergence result for the Euler domain preservation method \eqref{eq:method}.

\begin{theorem}
Let Assumptions \ref{ass:fg} and \ref{ass:X0} hold.
Then there exists a constant $K>0$ such that for all $n \in\NZ{N}$
$$ 
\eval{\norm{\vec{E}_{n}}^2} \leq K \Dt e^{KT}.
$$
\end{theorem}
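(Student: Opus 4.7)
The plan is a one-step mean-square error analysis combined with a discrete Gronwall inequality, in the spirit of the classical Euler--Maruyama strong convergence argument. Setting $\bfzeta := \vec{Y}_n^{\vec{M}_n}$, I would observe that in every case $M_{i,n+1}\in\{L,R,\theta\}$ the Ito-Taylor expansions \eqref{eq:left_expansion}, \eqref{eq:right_expansion} and \eqref{eq: theta_expansion} share identical leading Euler--Maruyama terms $\zeta_i + \intn{t_{n+1}} f_i(\bfzeta)ds + \intn{t_{n+1}} G_i(\bfzeta)dW_i(s)$. Subtracting from the exact expansion \eqref{eq:exact_expansion} I obtain the componentwise recursion
\begin{align*}
E_{i,n+1} = E_{i,n} &+ \intn{t_{n+1}} \bigl(f_i(\vec{X}(t_n)) - f_i(\bfzeta)\bigr) ds \\
&+ \intn{t_{n+1}} \bigl(G_i(\vec{X}(t_n)) - G_i(\bfzeta)\bigr) dW_i(s) + r^X_i(t_n,t_{n+1}) - r^{M_{i,n+1}}_i(t_n,t_{n+1}),
\end{align*}
valid regardless of which indicator in \eqref{eq:method} fires.

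First I would square, sum over $i\in\NZ{d}$ and take expectations. The stochastic cross term with $E_{i,n}$ vanishes by adaptedness, the deterministic cross term is absorbed via $2ab\le \Dt\,a^2 + \Dt^{-1}b^2$, and the Lipschitz continuity of $\vec{f}$ and $G$ on the compact set $\bar D$ (granted by \eqref{eq: bounded_derivative}) together with the Ito isometry and Cauchy--Schwarz gives
\[
\eval{\norm{\vec{E}_{n+1}}^2} \le (1+K\Dt)\eval{\norm{\vec{E}_{n}}^2} + 2\sum_{i=1}^d \Bigl(\eval{(r^X_i)^2} + \eval{(r^{M_{i,n+1}}_i)^2}\Bigr),
\]
for a constant $K$ depending only on $C$ from \eqref{eq:Cconst}, on $T$, and on the $L_i, R_i$.

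Next I would bound each remainder by $O(\Dt^2)$. For $r^X_i$ the integrands $I^X_{\cdot,j}$ in \eqref{eq:remainder_exact} are uniformly bounded on $\bar D$ by \eqref{eq: bounded_derivative}, and by Theorem \ref{teo:exact_boundedness} the exact solution lives in $\bar D$ almost surely; applying Ito isometry and Cauchy--Schwarz to each iterated integral gives $\eval{(r^X_i)^2}\le K\Dt^2$. For $r^{M_{i,n+1}}_i$ the analogous integrands in \eqref{eq:remainder_left} and \eqref{eq:remainder_right} involve the continuous auxiliary processes $\xlbc(\tau)-L_i$ and $R_i-\xrbc(\tau)$, each of which solves a linear SDE on $[t_n,t_{n+1}]$ with $\mathcal{F}_{t_n}$-measurable bounded coefficients; a standard geometric-Brownian-motion moment estimate, uniform in $n$ for small $\Dt$, yields the required $L^2$ bounds, and the $\theta$ case follows by convex combination. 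Inserting these bounds produces the one-step recursion $\eval{\norm{\vec{E}_{n+1}}^2} \le (1+K\Dt)\eval{\norm{\vec{E}_{n}}^2} + K\Dt^2$, from which, noting $\vec{E}_0 = 0$, discrete Gronwall immediately gives $\eval{\norm{\vec{E}_n}^2}\le K\Dt\, e^{KT}$.

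The step I expect to be the main obstacle is establishing the uniform-in-$n$ moment control on the auxiliary continuous processes $\xlbc,\xrbc$ needed for the numerical remainder estimates: the integrands $I^L_{\cdot\cdot}, J^L_{\cdot\cdot}, I^R_{\cdot\cdot}, J^R_{\cdot\cdot}$ in \eqref{eq:remainder_left}--\eqref{eq:remainder_right} are not pointwise bounded by domain-only constants and must be controlled in $L^2$ via the linear SDE moment estimate on a single step. Once that is in hand, the bookkeeping---combining the three cases of $\MMin$, handling the $\theta$ linear combination, and running discrete Gronwall---follows the standard pattern and introduces no essential new difficulty.
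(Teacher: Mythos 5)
Your proposal follows essentially the same route as the paper: the identical one-step error recursion assembled from the Ito--Taylor expansions \eqref{eq:left_expansion}, \eqref{eq:right_expansion}, \eqref{eq: theta_expansion} and \eqref{eq:exact_expansion}, vanishing of the martingale cross terms, nested-integral bounds on the remainders, and a discrete Gronwall step. You are also right that the only genuinely delicate analytic input is the $L^2$ control of the continuous auxiliary flows $\xlbc(\tau)$, $\xrbc(\tau)$ appearing inside the remainder integrands \eqref{eq:remainder_left}--\eqref{eq:remainder_right}; the paper disposes of this by invoking Assumption \ref{ass:fg}, Theorem \ref{teo:exact_boundedness}, Corollary \ref{cor:boundedness_numerics} and the nested-integral lemma of Milstein, and your one-step geometric-Brownian-motion moment estimate is the honest way to justify it, since those integrands are only moment-bounded, not pointwise bounded.

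One step in your sketch is too coarse to survive as written: the cross term $2\,\eval{E_{i,n}\bigl(r^{X}_i - r^{M_{i,n+1}}_i\bigr)}$. If you know only the lumped bound $\eval{(r_i)^2}=O(\Dt^2)$ and absorb this cross term via $2ab\le \Dt a^2+\Dt^{-1}b^2$, you pick up an $O(\Dt)$ contribution per step, hence $O(1)$ after summing over $N=T/\Dt$ steps, and the convergence rate is lost; using $2ab\le a^2+b^2$ instead destroys the $(1+K\Dt)$ prefactor needed for Gronwall. The remainder must be split as in the paper: the pieces $\rho_{11},\rho_{10},\rho_{01}$ are iterated integrals containing at least one $dW$ and so have zero conditional mean given $\mathcal{F}_{t_n}$, whence their cross terms with the adapted $E_{i,n}$ vanish outright, while the purely deterministic piece $\rho_{00}$ satisfies the sharper bound $\eval{\rho_{00}^2}\le \tilde C_{00}\Dt^4$, so Cauchy--Schwarz gives $\eval{E_{i,n}\rho_{00}}\le \tfrac{1}{2}\sqrt{\tilde C_{00}}\bigl(\Dt\,\eval{\norm{\vec{E}_n}^2}+\Dt^3\bigr)$. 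With that refinement your one-step inequality and the Gronwall conclusion go through exactly as in the paper.
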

\begin{proof}
By \eqref{eq: error_definition_component} and the Ito-Taylor expansions given in \eqref{eq:left_expansion}, \eqref{eq:right_expansion},\eqref{eq: theta_expansion} and \eqref{eq:exact_expansion} we have 
\begin{align*} 
E_{i,n+1} & =  X_i (t_{n+1})-\xnumc_{i,n+1}^{M_{i,n+1}} \\
& = E_{i,n}+  \intn{t_{n+1}} f_i (\vec{X}(t_n))- f_i (\xnum_n^{M_n}) ds\\
& \quad + \intn{t_{n+1}} \left( G_i(\vec{X}(t_n)) - G_i(\xnum_n^{M_n}) \right) dW_i (s)
 + r^{X}_i(t_n,t_{n+1})-r^{M_{i,n+1}} _i(t_n,t_{n+1}).
\end{align*}
Introducing shorthand notation $A^s_n$ for the deterministic integral and $B_n^w$ for the stochastic integral we write this as 
\begin{equation} \label{eq: single_err_component}
   E_{i,n+1}= E_{i,n} + A_n^s + B_n^w +  \rhoii+\rhoio+\rhooi+\rhooo
\end{equation}
where 
\begin{align*}
&
\rhoii=\sumj\intn{t_{n+1}} \intn{s}  (I^{X}_{11,j}-I^{M_{i,n+1}}_{11}) dW_j (\tau) dW_i(s) - \intn{t_{n+1}} \intn{s} I_{11}^\MMin dW_i(\tau) dW_i(s)\\
&
\rhoio:=\sumj \intn{t_{n+1}} \intn{s}   I^{X}_{10,j}dW_j (\tau)  ds-\intn{t_{n+1}} \intn{s}  I^{M_{i,n+1}}_{10}  dW_j (\tau)  ds \\
&
\rhooi:=\sumj \intn{t_{n+1}} \intn{s} I^{X}_{01,j}d\tau dW_i (s)-\intn{t_{n+1}} \intn{s}  (I^{M_{i,n+1}}_{01}+J^{M_{i,n+1}}_{01} ) d\tau dW_i (s)\\
&
\rhooo:=\sumj \intn{t_{n+1}} \intn{s} 
I^{X}_{00,j}d\tau ds-\intn{t_{n+1}} \intn{s} ( I^{M_{i,n+1}}_{00}  +J^{M_{i,n+1}}_{00})d\tau ds.
\end{align*}
By the order of the nested integrals as given in  \cite[{Lemma 2.2}]{milsteinbook}, and using Assumption \ref{ass:fg}, Theorem \ref{teo:exact_boundedness} and Corollary \ref{cor:boundedness_numerics} 
\begin{equation}\label{eq:constants}
\eval{\rhooo^2} \leq \tilde{C}_{00} \Dt ^4; \quad 
\eval{\rhoio^2} \leq \tilde{C}_{10} \Dt ^3;\quad
\eval{\rhooi^2} \leq \tilde{C}_{01} \Dt ^3; \quad
\eval{\rhooo^2} \leq \tilde{C}_{11} \Dt ^2.
\end{equation}
By taking square of \eqref{eq: single_err_component}, we have
\begin{align*}
 E_{i,n+1}  ^2=& E_{i,n}  ^2+2  E_{i,n}A_n^s +2  E_{i,n}B_n^w+2E_{i,n}(\rhoii+\rhoio+\rhooi)+ 2 E_{i,n}\rhooo  \\
 & +A_n^s{}  ^2+ 2   A_n^s B_n^w+2 A_n^s (\rhoii+\rhoio+\rhooi)+ 2  A_n^s \rhooo\\
 & +B_n^w{}  ^2+2 B_n^w (\rhoii+\rhoio+\rhooi )+ 2  B_n^w \rhooo\\
 &+ (\rhoii+\rhoio+\rhooi)^2+2(\rhoii+\rhoio+\rhooi ) \rhooo  + \rhooo ^2.
\end{align*}
When taking expectation, the following terms have zero expectation 
\begin{align*}
\eval{E_{i,n}B_n^w}=0, \qquad & \eval{E_{i,n}(\rhoii+\rhoio+\rhooi)}=0\\
\eval{A_n^s B_n^w}=0, \qquad & \eval{A_n^s (\rhoii+\rhoii+\rhooi )}=0\\
\eval{B_n^w \rhooo}=0, \qquad & \eval{ \l (\rhoii+\rhoio+\rhooi\r) \rhooo}=0.\\
\end{align*}
By Assumption \ref{ass:fg}, we have  
\begin{align*} \label{eq: A_bounds}
\eval{E_{i,n}A_n^s}&= \Dt \eval{E_{i,n} \left(  f_i (\vec{X}(t_n))- f_i (\xnum_n^{M_n}) \right)} \leq C \Dt \eval{\norm{\vec{E}_{n}}^2}\\
\eval{A_n^s{}  ^2}= & \Dt^2 \eval{ \left(  f_i (\vec{X}(t_n))- f_i (\xnum_n^{M_n} ) \right)^2 } \leq C^2 \Dt^2  \eval{\norm{\vec{E}_{n}}^2}. \numberthis
\end{align*}
By the continuously differentiability assumption on  the functions $g_i$ and Ito's isometry, we have 
%
\begin{equation}\label{eq: G_bound}
\eval{B_n^w{}  ^2}  \leq \tilde{C}_G^2  \Dt \eval{\norm{\vec{E}_{n}}^2}.
\end{equation}
where, for $C$ from \eqref{eq:Cconst} 
$$\tilde{C}_G= \frac{C}{4}\max_{i\in\NZ{d}}(R_i-L_i)^2+2C\max_{i\in\NZ{d}}(R_i-L_i).$$
By Jensen's inequality for sums, and using \eqref{eq:constants} we get 
\begin{equation} \label{eq: 3r_bound}
\eval{\l(\rhoii+ \rhoio+\rhooi\r)^2}  \leq 3 (\tilde{C}_{11}+\tilde{C}_{10}+\tilde{C}_{01}) \Dt ^2.
\end{equation}
Applications of the stochastic Cauchy-Schwarz inequality and the arithmetic-geometric mean inequality gives the following three bounds
\begin{align*}
\eval{E_{i,n} \rhooo } & \leq \eval{\norm{\vec{E}_{n}}^2} ^{1/2} \eval{\l(\rhooo \r)^2} ^{1/2}   \leq  \eval{\norm{\vec{E}_{n}}^2} \sqrt{\tilde{C}_{00}} \Dt^{1/2} \Dt^{3/2} \\
& \leq \dfrac{ \sqrt{\tilde{C}_{00}} }{2} \left(\eval{\norm{\vec{E}_{n}}^2} \Dt + \Dt^3 \right). \numberthis
\end{align*}

\begin{align*}
\eval{A_n^s \rhooo}& \leq \eval{A_n^s {} ^2} ^{1/2} \eval{\l( \rhooo \r)^2} ^{1/2}  \leq C \sqrt{\tilde{C}_{00}} \Dt ^{1/2}  \eval{\norm{\vec{E}_{n}}^2} ^{1/2}  \Dt^{5/2} \\
& \leq \dfrac{C  \sqrt{\tilde{C}_{00}} }{2} \left(\eval{\norm{\vec{E}_{n}}^2} \Dt + \Dt^5 \right) \numberthis
\end{align*}

\begin{align*}
\eval{B_n^w \l(\rhoii+ \rhoio+\rhooi\r) }& \leq \eval{B_n^w {} ^2} ^{1/2} \eval{\l(\rhoii + \rhoio+\rhooi\r)^2} ^{1/2} \\
& \leq \tilde{C}_G  \Dt^{1/2}  \eval{\norm{\vec{E}_{n}}^2} ^{1/2} \sqrt{3} \sqrt{\tilde{C}_{10}+\tilde{C}_{01}+\tilde{C}_{11}} \Dt \\
& \leq \dfrac{ \tilde{C}_G}{2}   \sqrt{3} \sqrt{\tilde{C}_{10}+\tilde{C}_{01}+\tilde{C}_{11}}  \left(\eval{\norm{\vec{E}_{n}}^2} \Dt + \Dt^2 \right).
\end{align*}
Summing over time, the component wise  error is given by
 \begin{equation*}
\eval{\l| E_{i,n} \r|^2}  \leq \lambda  \Dt + \lambda  \Dt \sum_{k=0} ^{n-1} \eval{\norm{\vec{E}_{k}}^2}.
 \end{equation*}
 where 
 \begin{multline*}
 \lambda:= \\ \max \Big\{ 2C,  \sqrt{\tilde{C}_{00}} ,C^2, C\sqrt{\tilde{C}_{00}} ,  \tilde{C}_G^2, \tilde{C}_G  \sqrt{3} \sqrt{\tilde{C}_{10}+\tilde{C}_{01}+\tilde{C}_{11}},3 (\tilde{C}_{10}+\tilde{C}_{01}+\tilde{C}_{11}),\tilde{C}_{00} \Big\}.
 \end{multline*}
By summation over the $d$ components and the discrete Gronwall Lemma, we have
 \begin{align*}
\eval{\norm{\vec{E}_{n}}^2} & \leq \lambda  d \Dt + \lambda d \Dt \sum_{k=0} ^{n-1} \eval{\norm{\vec{E}_{k}}^2} \\
\eval{\norm{\vec{E}_{n}}^2}& \leq \lambda d \Dt e^{K T}.
 \end{align*}
 Hence the result is proved.
\end{proof}

\subsection{Selection of $\theta$ and local truncation error for the Euler method}
When $M_{i,n+1}=\theta$, we have to choose a value for $\theta_{i,n+1}$.
In the deterministic setting for a $\theta$ method the choice of $\theta=\tfrac{1}{2}$ is determined from a local error analysis.
Clearly we could choose $\theta$ to be a constant.
In Section \ref{sec:numerics} we take $\theta_{i,n}=\tfrac{1}{2}$ for all $i\in\NZ{d}$ and $n\in\NZ{N}$ and denote the method with this choice \EMMean.

Our aim now is to pick $\theta_{i,n+1}$ to reduce the local error, we will denote this method \EMWeighted.
For a local error analysis, we need to work with numerical and exact flows but having same initial condition at $t=t_n$, say $\xnum_n \in \mathbb{R}^d$. 

For simplicity let us assume that 
$\fc(\ProjL(\xnum_{n}))= \fc(\ProjR(\xnum_{n})) =0$. and so examine the error from the approximation of the diffusion.
We then define
\begin{align*} \label{eq:theta_flow}
\Psi ^L (t_n,t_{n+1},\xnum_n)& := L_i+e^{\alpha^L_{i}(\xnum_n ) \Dt+\beta^L _{i} (\xnum_n)\DWnc}(\xnumc_{i,n}-L_i) \nonumber \\
\Psi ^R (t_n,t_{n+1},\xnum_n)& :=R_i-e^{\alpha^R_{i}(\xnum_n) \Dt+\beta^R _{i}(\xnum_n)\DWnc}(R_i-\xnumc_{i,n})  \nonumber \\
\Psi ^{\theta} (t_n,t_{n+1},\xnum_n)& :=(1-\theta_{i,n+1}) \Psi ^{L} (t_n,t_{n+1},\xnum_n)+\theta_{i,n+1} \Psi ^{R} (t_n,t_{n+1},\xnum_n).
\end{align*}
Furthermore we denote the flow corresponding to the $i$th component of \eqref{eq: componentSDE} starting from $\xnum_n$ by $\Psi^{X} (t_n,t_{n+1},\xnum_n)$. 
Then the one step error for the $i$th  component is expressed as
\begin{equation*} 
\lte_i:=\Psi ^{\theta} (t_n,t_{n+1},\xnum_n)-\Psi ^{X} (t_n,t_{n+1},\xnum_n).
\end{equation*}
By recalculating the Ito-Taylor expansions for numerical and exact flows as in \eqref{eq:left_expansion} \eqref{eq:right_expansion}, \eqref{eq: theta_expansion}, 
\eqref{eq:exact_expansion}, we have 
\begin{align*} 
\lte_i=&(1-\theta_{i,n+1}) \intn{t_{n+1}} \intn{s} \l( g_i (\xnum_n) \l(R_i-\xnumc_{i,n} \r) \r)^2  \l(   \xnumc_{i,n}-L \r) dW_i(\tau) dW_i(s) \nonumber \\
&-\theta_{i,n+1} \intn{t_{n+1}} \intn{s} \l( g_i (\xnum_n) \l(\xnumc_{i,n}-L_i \r) \r)^2  \l(  R_i- \xnumc_{i,n} \r) dW_i(\tau) dW_i(s) \nonumber \\
& - \sumj \intn{t_{n+1}}  \intn{s}  g_j (\xnum_n) (\xnumc_{j,n} -L_j)  (R_j-\xnumc _{j,n}) \der{j}G_i (\xnum_n)dW_j (\tau) dW_i(s) + h.o.t
\end{align*}
where $h.o.t.$ refers to higher order terms obtained by repeated substitution of Ito representation of each flow. 

Under the assumption that the noise is diagonal, i.e. $g_i(\xnum_n)=\tilde{g}_i (\xnumc_{i,n})$, this reduces to 
\begin{align*} \label{eq: lte}
\lte_i=& \Xi_n  \tilde{g}_i (\xnumc_{i,n}) \l(R_i-\xnumc_{i,n} \r) \l(   \xnumc_{i,n}-L_i \r)  \intn{t_{n+1}} \intn{s}dW_i(\tau) dW_i(s) +h.o.t.
\end{align*}
where 
\begin{multline*}
\Xi_n:=\l(1-\theta_{i,n+1}\r) \tilde{g}_i (\xnumc_{i,n}) \l(R_i-\xnumc_{i,n} \r)   -  \theta_{i,n+1}   \tilde{g}_i (\xnumc_{i,n})    \l(  \xnumc_{i,n} -L_i \r) \\ - \der{i} \Bigl(\tilde{g}_i (\xnumc_{i,n}) (\xnumc_{i,n}-L_i)  (R_i-\xnumc_{i,n}) \Bigr) .
\end{multline*}
For the first term above to vanish $\theta_{i,n+1}$ should be selected such that
$\Xi_n=0$. 
Therefore, taking
\begin{equation}\label{eq: optimal_theta}
\theta_{i,n+1}=\frac{\xnumc_{i,n}-L_i}{R_i-L_i}\l(1-\frac{\der{i}\tilde{g} (\xnumc_{i,n}) }{\tilde{g} (\xnumc_{i,n})} \l(R_i-\xnumc_{i,n}\r)\r)
\end{equation}
eliminates the leading term and the local error is reduced to $\eval{\lte_i^2 }\leq K \Dt^3$. 

We see below in Section \ref{sec:numerics} that this choice can improve the observed rate of convergence of the method.

\section{Numerical Examples}
\label{sec:numerics}
We compare our numerical methods to other methods that also preserve the domain.
We consider two versions of the Euler method 
\eqref{eq: theta update}.
We denote by \EMMean the method in \eqref{eq: theta update} with $\theta_{i,n}=\tfrac{1}{2}$ for all $i$ and $n$.
We denote the Euler method with $\theta_{i,n}$ from \eqref{eq: optimal_theta} by \EMWeighted. 
We also consider the Mistein version of the scheme in \eqref{eq: theta update_mil} with a fixed value of $\theta=\tfrac{1}{2}$
and denote this \MilMean.
%
%
We introduce projected versions of the standard Euler (denoted \ProjEM) and Milstein method (denoted \ProjMil). These are constructed by considering
$$X_{i,n+1}= \max(\min(R_i,\tilde{X}_{i}),L_i)$$
where $\tilde{X}$ is either the standard Euler 
$$\tilde{X_i}=X_{i,n}+f_i(\vec{X}_n)\Dt +G_i(\vec{X}_n)\Delta W_{i,n} $$
or standard Milstein update
$$\tilde{X_i}=X_{i,n}+f_i(\vec{X}_n)\Dt +G_i(\vec{X}_n)\Delta W_{i,n} + G_i(\vec{X}_n)G_i'(\vec{X}_n)(\Delta W_{i,n}^2-\Dt).$$ 
Although these are natural methods to consider we are not aware of strong convergence proofs for these methods.
We now consider four different examples and, where other domain preservation methods have been developed that may be applied, we compare the methods above to those.

In our implementations of \EMMean, \EMWeighted and \MilMean for numerical stability it can be beneficial to add an subtract the 
the drift values at the boundaries in \eqref{eq:splitODESDE} \eqref{eq:splitODESDE_right}. This is useful  when the drift term takes very small values in magnitude and was proposed in \cite{bossy2021}. 
This was applied in example (3) for parameter set (i). Although the method converges without applying this technique the error constant is larger. 



\subsection*{Example 1: An SDE with exact  solution.} 
The one-dimensional SDE
\begin{equation} \label{eq: SDEwithExact}
dX(t) = -\beta^2 X(t)(1-X^2(t)) dt + \beta (1-X^2(t)) dW(t)
\end{equation}
is introduced in \cite{kloeden2011} with the exact solution $X(t)\in (-1,1)$ a.s. given by
$$
X(t) = \frac{(1+X_0)e^{2\beta W(t)} + X_0 -1 }{(1+X_0)e^{2\beta W(t)} + 1 - X_0}.
$$
We run the numerical experiment on the interval $t \in [0,4]$ for  $\beta=2$ and $X_0=0.9$ and compare the Artificial Barrier Euler-Maruyama Method (\texttt{ABEM}) \cite{ulander2024artificial}, to \EMMean, \EMWeighted, \ProjEM and \ProjMil.
Sample paths produced by these schemes for a single realization is shown in Figure \ref{fig: withExact} (a) with a uniform time step $\Dt=2^{-7}$. We clearly see all the methods preserve the domain and the methods analysed here remain close to the solution.
Figure \ref{fig: withExact} (b) shows the root mean square error estimated with 2000 realizations. Note that although the theoretical rate of convergence for \EMWeighted is $\tfrac{1}{2}$ we observe a rate of convergence closer to one. Furthermore we observe that the error constant for \EMWeighted is the smallest.
\begin{figure}
\begin{center}
(a) \hspace{0.49\textwidth} (b)\\  \includegraphics[width=0.49\textwidth,height=0.25\textheight]{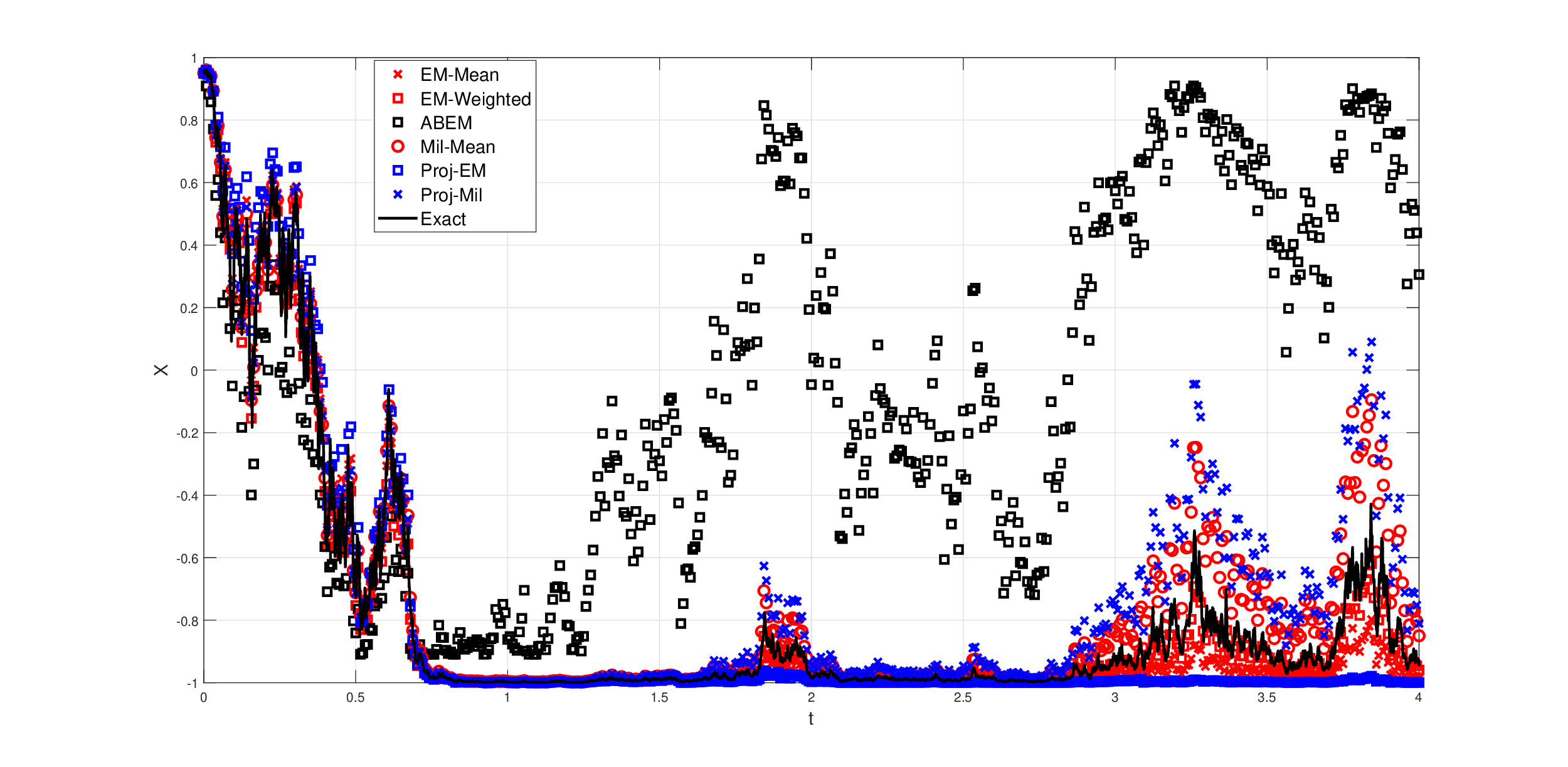}
\includegraphics[width=0.49\textwidth,height=0.25\textheight]{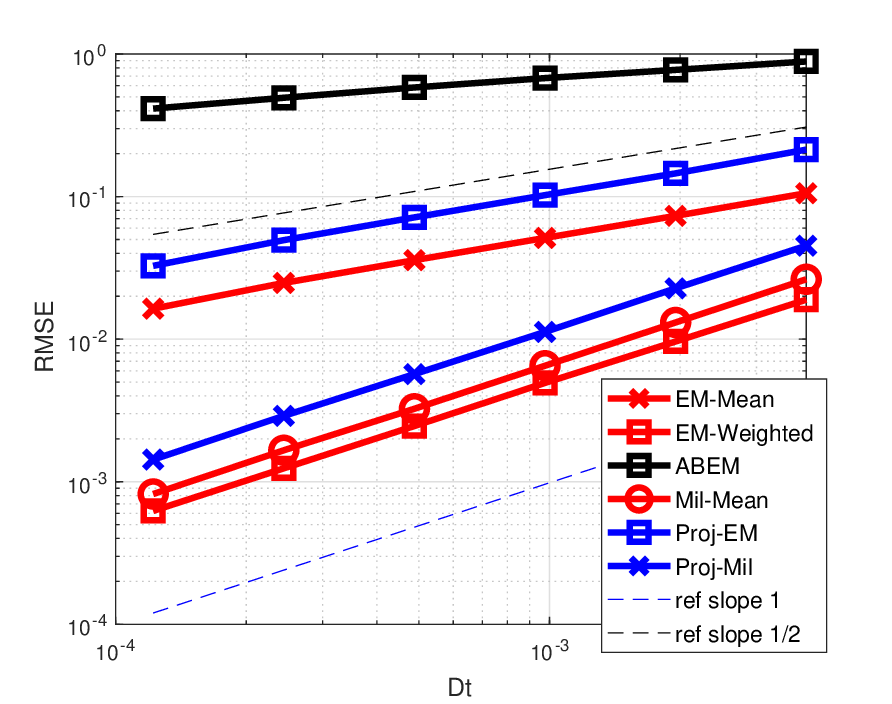}
\end{center}
\caption{Sample paths for example 1 (a) and in (b) RMSE against $\Delta t$.}
  \label{fig: withExact}
\end{figure}

\subsection*{Example 2:  Trigonometric diffusion term}
Consider the one-dimensional SDE  
\begin{equation} \label{eq: SDEwithTrig}
dX(t)=X(t) (1-X(t))dt+\sin(\pi X(t))dW(t).
\end{equation}
Note that the diffusion coefficient could be restated as  $g(x) x (1-x)$ for  $g(x)=\frac{\sin(\pi x)}{x (1-x)}$ and that both $g$ and $g'$ are bounded on (0,1) with   finite limits at $0$ and $1$. Therefore our methods are applicable for \eqref{eq: SDEwithTrig}.  We estimate the root mean square error with 2000 samples with two different settings for the initial data for $t\in[0,1]$. In the first experiment, initial values for each realizations are randomly chosen from $U(0,1)$. In the second example,  the initial value is fixed $X(0)=0.95$ for all realizations. Since we do not have an exact solution in this case reference solutions are obtained using the method \MilMean with $\Delta t=2^{-18}$. Furthermore we are not aware of another method for domain preservation for this type of example.
In Figure (\ref{fig: withTrigRandom}) (a) orders of strong errors are compared for the random initial data and in (b) for the fixed initial data.
We observe that \MilMean seems to have the smallest error constant in this example and once again \EMWeighted shows a much improved rate of convergence of the theoretical rate and is competitive with the Milstein method. 

\begin{figure}
\begin{center}
    (a) \hspace{0.48\textwidth} (b)\\
  \includegraphics[width=0.48\textwidth,height=0.25\textheight]{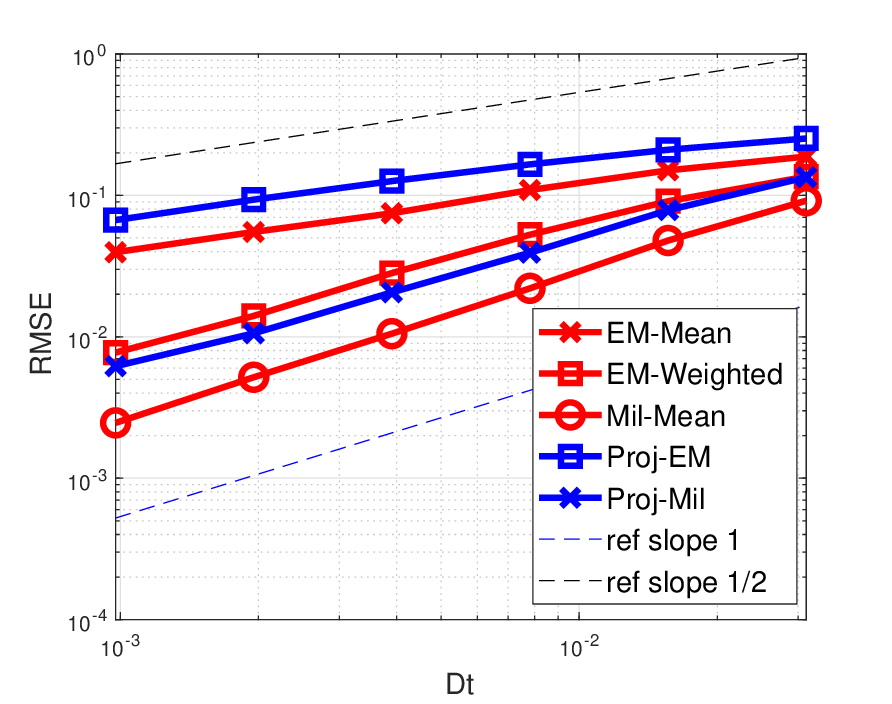}
    \includegraphics[width=0.48\textwidth,height=0.25\textheight]{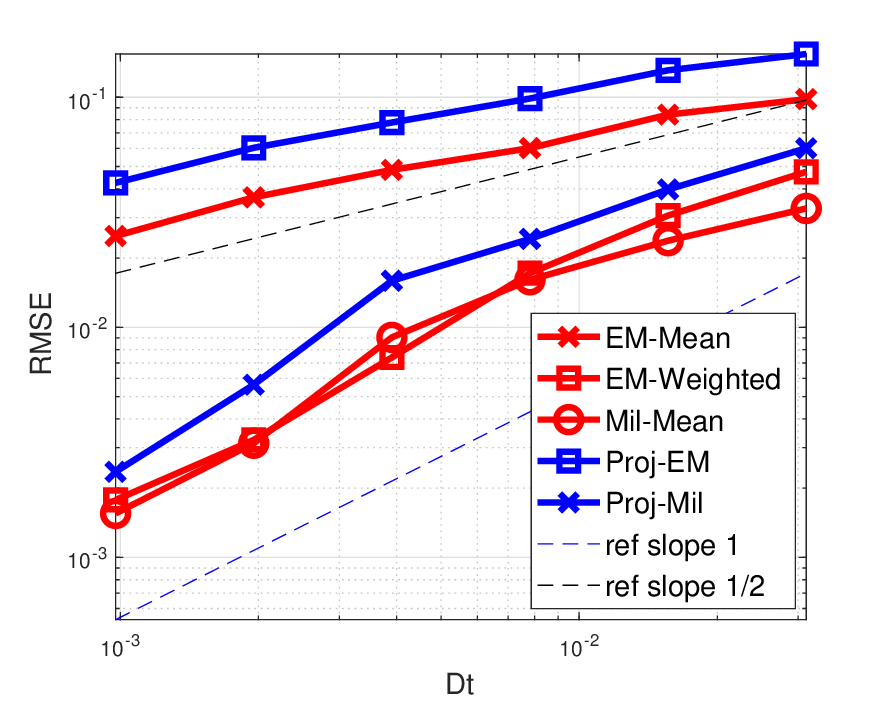}
  \caption{RMSE against $\Delta t$  for example 2 with  (a) random initial values drawn from $U(0,1)$ and (b) for fixed initial data $X(0)=0.95$.}
  \label{fig: withTrigRandom}
\end{center}
\end{figure}

\subsection*{Example 3:  Susceptible–Infected–Susceptible (SIS)}
Reconsider the SIS model
\begin{equation*}
   dI = [\eta I(t) -\beta I(t)^2] dt + \sigma [N-I(t)]I(t) dW(t)
\end{equation*}
 derived and analysed in \cite{gray2011} and investigated numerically by a number of authors for different parameter values. 
We consider two sets of parameters, both with $t\in[0,4]$ and take 2000 realizations. For the first of these (Figure \ref{fig: sis_mid} (a)) we compare to methods based on the Lamperti tranformation for this equation
and take $I(0)=9.99$, $\eta=8$, $\beta=1,\sigma=0.1$  and $N=10$ as in \cite{chen2021first}. 
In particular we compare to the Lamperti Transformation based methods Lamperti Smoothing Truncation method (\texttt{LST}) of \cite{chen2021first}, the  Lamperti Backward Euler (\texttt{LBE}) scheme of \cite{neuenkirch2014first}, Lamperti Splitting  (\texttt{LS})  \cite{ulander2023boundary} scheme  and the Logarithmic Euler Maruyama (\texttt{LEM}) scheme \cite{yang2021first} for one set of parameter values. We observe that \texttt{LST} and \texttt{LEM} are not yet in the asymptotic regime, even for these small values of $\Dt$. We also see that  \texttt{Proj-EM} shows convergence of order half where as the other methods are essentially equivalent (of order one) and that also \EMWeighted is observed to converge with rate one.

For another set of parameters we compare (see Figure \ref{fig: sis_mid} (b)) to \texttt{ABEM} discussed in Example 1. We take $I(0)=0.95$ and  $\eta=1,\beta=1,\sigma=2$ and $N=1$ as in \cite{ulander2023boundary}.
We observe that \EMWeighted is observed to converge with rate one; the two Euler methods are similar and the Milstein methods are also similar.

\begin{figure}
\begin{center}
    (a) \hspace{0.49\textwidth} (b)\\
\includegraphics[width=0.49\textwidth,height=0.25\textheight]{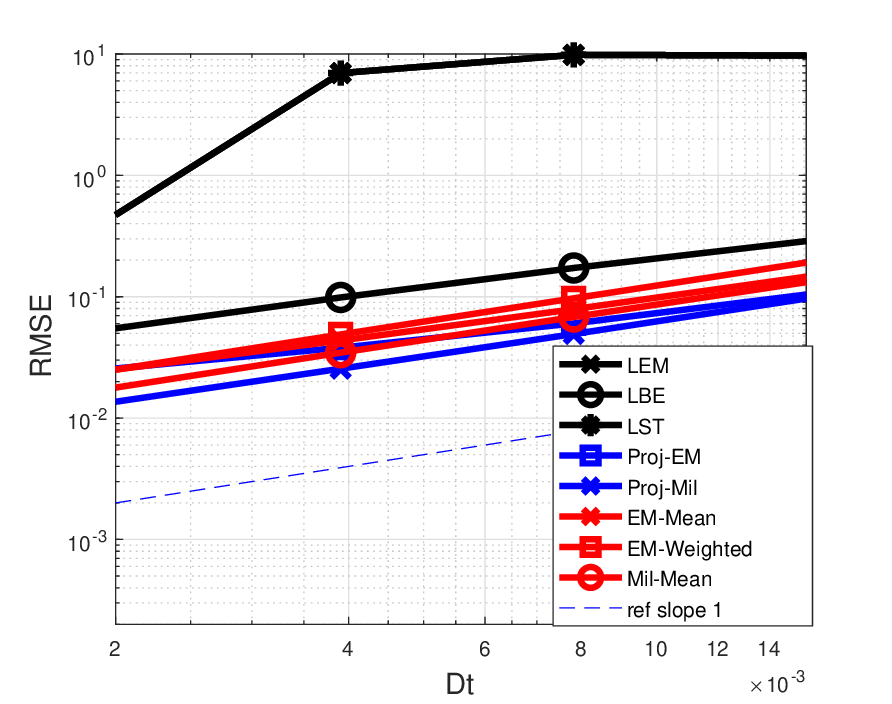}
\includegraphics[width=0.49\textwidth,height=0.25\textheight]{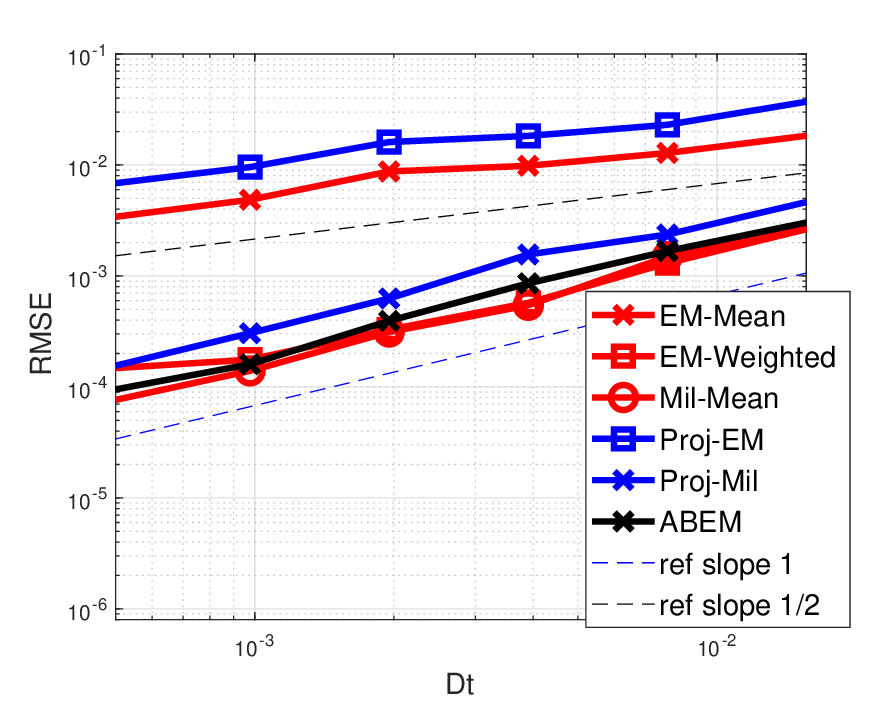}
    \end{center}    
  \caption{RMSE against $\Delta t$ for example 3 with initial condition $X(0)=9.99$ and $N=10$ (a) and $X(0)=0.95$ with $N=1$ (b). See text for other parameter values.}
  \label{fig: sis_mid}
\end{figure}


\subsection{Example 4: Nagumo SPDE}
We consider the standard finite difference approximation in space of the Nagumo type SPDE
  $$dX=\Big[0.001 X_{xx}+X(1-X)(X+\frac{1}{2})\Big]dt+ 2 (1-X)(X+\frac{1}{2}) dW.$$
  with initial condition $ X(x,0)=\Big(1+\exp( -(2-x)/\sqrt{2} ) \Big)^{-1}$ and Neumann boundary conditions with $x\in[0,20]$ and $t\in[0,1]$. We take noise $W$ that is white in space and hence consider space-time white noise.
  For this model we expect $X(x,t)\in[-\frac{1}{2},1]$. From our discretization in space we obtain a system of 128 SDEs. 
  We employ our Euler type methods \EMMean and \MilMean and a standard semi implicit Euler-Maruyama method \cite{lord2014introduction} (denoted \texttt{EM-IMP}) for comparison.
  We observed that \texttt{EM-IMP} does not preserve domain, see for example Figure \eqref{fig: NagumoSPDE} (a) where $\max_{x} X(x,t)$ is plotted for $\Dt=2^{-5}$ for one sample realization of the noise for all three methods. In Figure \eqref{fig: NagumoSPDE} (b) we plot the root mean square error against $\Dt$, based on $1000$ realizations, and observe that \EMWeighted is the most accurate of the methods and we observe a rate of convergence that is better than that for \EMMean.  

\begin{figure}
\begin{center}
    (a) \hspace{0.48\textwidth} (b)\\
  \includegraphics[width=0.48\textwidth,height=0.25\textheight]{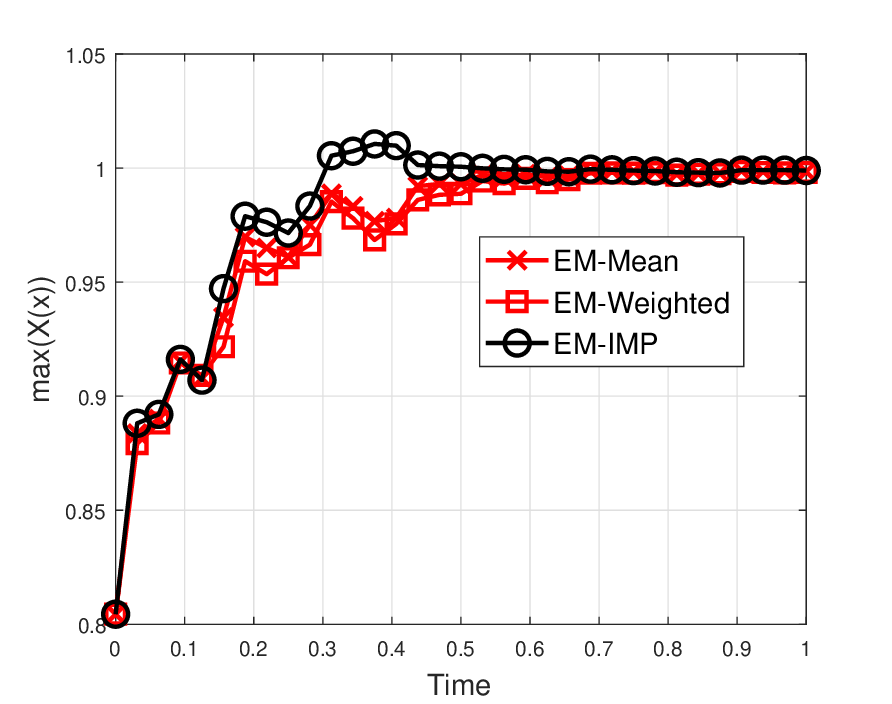}
    \includegraphics[width=0.48\textwidth,height=0.25\textheight]{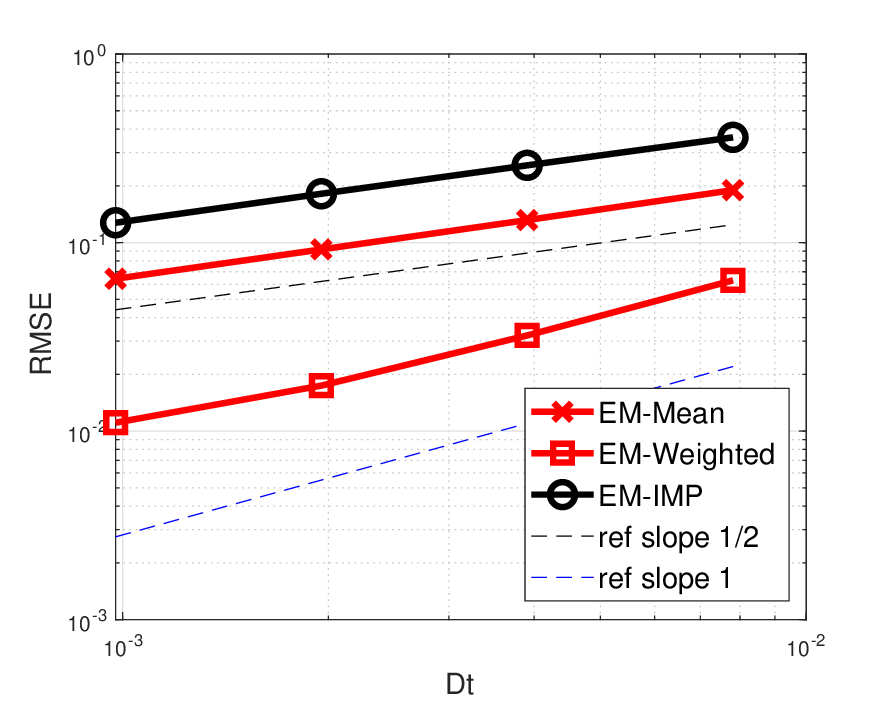}
  \caption{Example 4 (a) $\max_x(X(x,t))$ showing \texttt{EM-IMP} does not preserve the domain and (b) RMSE against $\Dt$.}
  \label{fig: NagumoSPDE}
\end{center}
\end{figure}
  
\printbibliography

%



\end{document}